\documentclass[onefignum,onetabnum]{siamonline250211}

\usepackage{siunitx}
\usepackage{lipsum}
\usepackage{amsfonts}
\usepackage{graphicx}
\usepackage{epstopdf}
\usepackage{algorithm}
\usepackage{algpseudocode}
\usepackage{subcaption}
\usepackage{booktabs}
\usepackage{mathtools}
\usepackage{caption}
\usepackage{mathtools}
\ifpdf
  \DeclareGraphicsExtensions{.eps,.pdf,.png,.jpg}
\else
  \DeclareGraphicsExtensions{.eps}
\fi

\usepackage{enumitem}
\setlist[enumerate]{leftmargin=.5in}
\setlist[itemize]{leftmargin=.5in}

\newsiamremark{remark}{Remark}
\newsiamremark{hypothesis}{Hypothesis}
\crefname{hypothesis}{Hypothesis}{Hypotheses}
\newsiamthm{claim}{Claim}
\newsiamremark{fact}{Fact}
\crefname{fact}{Fact}{Facts}

\headers{T. Larsen and A. Alexanderian}{T. Larsen and A. Alexanderian}

\title{
A new kernel-based approach for the global sensitivity analysis of models with correlated inputs
\thanks{Submitted to the editors on \today.
\funding{This work was supported in part by NSF grant DMS-2342344.}}}

\author{Troy Larsen\thanks{Department of Mathematics, North Carolina State University, Raleigh, NC.}
\and Alen Alexanderian\footnotemark[2]}

\usepackage{amsopn}

\DeclareMathOperator{\HSIC}{HSIC}

\newcommand{\cB}{\mathcal{B}}
\newcommand{\cF}{\mathcal{F}}
\newcommand{\cG}{\mathcal{G}}
\newcommand{\cH}{\mathcal{H}}
\newcommand{\cX}{\mathcal{X}}
\newcommand{\cY}{\mathcal{Y}}

\newcommand{\bP}{\mathbb{P}}
\newcommand{\bR}{\mathbb{R}}

\newcommand{\1}{\mathbf{1}}

\newcommand{\defeq}{\vcentcolon=}

\ifpdf
\hypersetup{
  pdftitle={A new kernel-based approach for the global sensitivity analysis of models with correlated inputs},
  pdfauthor={T. Larsen and A. Alexanderian}
}
\fi

\begin{document}

\maketitle

\begin{abstract}
We present an HSIC-based approach for global sensitivity analysis of broad classes of models with correlated and possibly function-valued inputs and outputs. To this end, we define the first-order and total HSIC sensitivity indices, which are bounded, interpretable, and moment-independent analogues to the first-order and total-effect Sobol’ indices, respectively. These desirable qualities hinge upon the key property of {\em monotonicity under marginalization} for the HSIC. We rigorously establish this monotonicity property by using a suitable class of augmented kernels. Furthermore, we provide an efficient algorithm for computing an empirical estimator of the HSIC that significantly reduces computational complexity and storage requirements. The effectiveness and interpretability of the first-order and total HSIC sensitivity indices are demonstrated through computational experiments on models that feature nonlinear relationships, correlated inputs, and functional outputs.
\end{abstract}

\begin{keywords}
Global sensitivity analysis, Hilbert--Schmidt Independence Criterion, correlated inputs, reproducing kernel Hilbert spaces, Sobol' indices, parameter dimension reduction.
\end{keywords}

\begin{MSCcodes}
	46E22, 62E10, 65C20
\end{MSCcodes}

\section{Introduction}

Global sensitivity analysis (GSA) is an aspect of uncertainty quantification
that addresses the following question:
given a mathematical model 
\begin{equation}\label{eq:model_basic}
    Y = f(X), \quad X = (X_1, \dots, X_p), 
\end{equation} 
how is the uncertainty in $Y$ apportioned to the different input parameters? In many applications, the inputs and output may be high-dimensional or function-valued. Accordingly, we
study this problem in a general functional analytic setting, where each
parameter $X_i$ and the output $Y$ are assumed to belong to real separable
Hilbert spaces. No assumption of independence is imposed on the inputs. This generality allows us to address sensitivity measures that apply to broad classes of models.

One widely-used approach to GSA utilizes the variance decomposition first introduced by
Sobol'~\cite{Sobol01,Sobol09,Sobol07}. The corresponding Sobol' indices quantify the contribution of each input or subset of inputs to the total output variance. While originally developed for scalar-valued functions of finitely many independent real-valued random inputs, subsequent works have extended the theory of the Sobol'
indices to vector-valued~\cite{Gamboa14} or 
function-valued~\cite{Alexanderian20,Cleaves19,Gamboa14} models. However, these developments fundamentally rely on the assumption of parameter independence.

Extensions of Sobol' indices to models with dependent inputs have been developed and studied as well; see, e.g.,~\cite{Hart18,CorrGSA}.  In the correlated setting, however, the Sobol' indices lose their intuitive interpretation as the contributions to the total output variance. This also complicates the use of the Sobol' indices for parameter dimension reduction by fixing the non-influential parameters at their nominal values.

These limitations have motivated the development of alternative GSA approaches 
for models with correlated inputs. These include derivative-based
methods~\cite{Kucherenko17, Lamboni21}, variance-based
analysis~\cite{Kucherenko12,CorrGSA,Rabitz10,Zhang15}, and Shapley
effects~\cite{daVeiga21, Iooss19}.  Our work builds on kernel-based
methods for GSA~\cite{Barr22,daVeiga15,daVeiga21,Durrande11,Gretton05a}, which provide
a flexible and powerful framework grounded in the theory of
reproducing kernel Hilbert spaces (RKHSs)~\cite{Aronszajn50,Hein04,RKHSreview,Paulsen16}; 
see Section~\ref{sec:framework} for the requisite mathematical preliminaries.
A key advantage of kernel-based GSA is its ability to treat models with
scalar-, vector-, or function-valued inputs and outputs under one general
framework. Also, these methods enable a natural treatment of models with
correlated inputs. 

Two sensitivity measures have proven particularly useful in the context of
kernel-based GSA: the {\em maximum mean discrepancy}~\cite{Barr22, daVeiga15},
and the {\em Hilbert--Schmidt Independence Criterion}
(HSIC)~\cite{daVeiga15,Greenfeld20, Gretton05b}.  Our work focuses on the HSIC,
a moment-independent statistic that quantifies the distance between the joint
law of the inputs and output and the product of the marginal laws. Under
appropriate choices of kernels, the HSIC between two random variables
vanishes if and only if they are independent~\cite{Gretton05b}. Consequently,
the condition $\HSIC(X_i,Y)=0$ indicates a lack of influence of $X_i$ on the
model output $Y$ in~\eqref{eq:model_basic}. Initial implementations of the HSIC
in GSA have used both the raw statistic and normalized variants to rank input
parameters based on their overall contribution to the uncertainty in the model
output~\cite{daVeiga21}.

A key barrier remains in using HSIC-based indices to construct interpretable sensitivity measures: in general, they fail to satisfy the crucial property of
{\em monotonicity under marginalization}. Specifically, we desire that all index subsets $A,B\subseteq\{1,\ldots,p\}$ satisfy\begin{equation} \HSIC(X_A,Y)\le \HSIC(X_B,Y) \quad \text{whenever} \ A\subseteq B.
\end{equation}
Attaining this property guarantees that the values of subsequently-defined sensitivity indices are increasing with respect to the number of inputs considered, thereby providing an interpretable ranking of parameter influence.

As detailed in Section~\ref{sec:monotonicity}, monotonicity under
marginalization can be ensured through an appropriate class of kernel functions. Namely,
by utilizing suitably augmented kernels, we can prove the requisite monotonicity
property. The analysis in Section~\ref{sec:monotonicity} also provides further
theoretical insight regarding the structure of the feature spaces associated with
commonly used kernels.
The framework developed in Section~\ref{sec:monotonicity} enables defining the \emph{first-order} and \emph{total HSIC sensitivity indices}; see Section~\ref{sec:totalHSICSec}. This extends and justifies the analogous indices presented in~\cite{daVeiga21} 
to the case of models with dependent inputs. The first-order and total
HSIC sensitivity indices are bounded, interpretable, and provide a moment-independent analogue
of the first-order and total-effect Sobol' indices, respectively. Collectively, these new HSIC-based sensitivity indices unify the
generality of kernel methods with the clear interpretability of Sobol'
indices.  These indices are also tractable to approximate numerically via sampling. 

The key contributions of this article are:
\begin{itemize}
	\item a rigorous analysis of the structural properties
	required to ensure the monotonicity of the HSIC under marginalization (see
	Section~\ref{sec:monotonicity});
	
	\item defining the first-order and total HSIC sensitivity indices (see Section~\ref{sec:totalHSIC}), which
	establishes two moment-independent sensitivity measures that are stable under marginalization 
	and accommodate models with correlated parameters; 
	
	
	\item a comprehensive set of computational experiments (see Section
	\ref{sec:numerics}) demonstrating the effectiveness of the first-order and
	total HSIC sensitivity indices across classical GSA benchmarks, including
	models with nonlinear relationships (Section~\ref{sec:ishigami}), correlated
	inputs (Section~\ref{sec:corrport}), and functional outputs
	(Section~\ref{sec:cholera}).
	\end{itemize} 
    %
    We also consider the computation of the HSIC indices 
    using a well-known empirical estimator for the
	HSIC; see Section~\ref{sec:theory}. 
    
    We point out that the primary focus of this article is the rigorous analysis 
    of the proposed HSIC indices. The numerical estimation of 
    HSIC indices for models with high-dimensional parameters is a subject 
    of future work. This is discussed further in 
    Section~\ref{sec:conc}, where we provide some concluding remarks.


\section{Mathematical Preliminaries}
\label{sec:framework}
While the theory of reproducing kernel Hilbert spaces (RKHSs) and the
Hilbert--Schmidt Independence Criterion (HSIC) can be formulated in far greater
generality, we restrict attention here to random variables taking values in real
separable Hilbert spaces.  This provides a convenient functional-analytic
setting, and applies to the common problems where inputs are real- or
vector-valued parameters and outputs may be time series or spatially distributed
field quantities.  Our goal in this section is to discuss the key elements of
RKHS theory essential for sensitivity analysis: kernel mean embeddings,
characteristic kernels, and the HSIC.

\subsection{Reproducing kernel Hilbert spaces}\label{sec:RKHS}
Let $D$ be a separable Hilbert space equipped with its Borel
$\sigma$-algebra $\cB(D)$.  A function $K: D\times D \to \bR$ is referred to as
a {\em kernel} if it is symmetric and positive definite. Kernels
inform the definition of an RKHS:
\begin{definition}\label{def:RKHS}
Let $\cH$ be a Hilbert space whose elements are real-valued functions on $D$. We say that $\cH$ is an RKHS if there is a kernel $k\in \cH$ so that for every $x\in D$, we have: \begin{enumerate} 
	\item {\em\bf{Inclusion:}} $k(x, \cdot) \in \cH$,
	\item {\em\bf{Reproduction}:} $f(x) = \langle f, k(x,\cdot) \rangle_\cH$ for every $f\in \cH$.
\end{enumerate}
\end{definition} 
Such a kernel $k$ is referred to as the (unique) {\em reproducing kernel} for
$\cH$. The RKHS formalism aligns with the core concept in machine learning of
transforming data into a more interpretable representation. Indeed, an RKHS
$\cH$ is often termed a {\em feature space}, and its reproducing kernel $k$
defines the {\em canonical feature map} $\Phi: D\to \cH$, where
$\Phi(x)=k(x,\cdot)$ for every $x\in D$. Exploiting the reproduction property of
$k$ yields the {\em kernel trick}: 
\begin{equation}\label{eq:kerneltrick} 
    \langle \Phi(x), \Phi(y)\rangle_\cH = k(x,y), \quad x,y\in D.
\end{equation} 
This relation states that evaluating the (often nonlinear) kernel $k$
corresponds to evaluating the (linear) inner product of the {\em features}
$\Phi(x)$ and $\Phi(y)$ in $\cH$. 

The Moore--Aronszajn theorem states that every
kernel $k$ uniquely determines a feature space $\cH_k$ on which the kernel trick
applies to $k$ \cite{Aronszajn50}.
\begin{theorem}\label{thm:moore}
	For every kernel $k: D\times D\to \bR$, there is a unique Hilbert space $\cH_k$ of real-valued functions on $D$ so that $\cH_k$ is an RKHS with $k$ as its reproducing kernel.
\end{theorem}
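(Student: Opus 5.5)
The plan is the standard Moore--Aronszajn construction: build a pre-Hilbert space from the kernel sections, complete it inside the space of functions on $D$, and then prove uniqueness separately.

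\textbf{Existence.} Let $\cH_0$ be the linear span of the functions $k(x,\cdot)$, $x\in D$. For $f=\sum_i a_i k(x_i,\cdot)$ and $g=\sum_j b_j k(y_j,\cdot)$, set
\[
\langle f,g\rangle_0=\sum_{i,j}a_ib_j\,k(x_i,y_j).
\]

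First, this is well defined. Indeed, $\langle f,g\rangle_0=\sum_j b_j f(y_j)=\sum_i a_i g(x_i)$, so the value depends only on $f$ and $g$ as functions, not on their representations. It is clearly bilinear and symmetric. Positive definiteness of $k$ gives $\langle f,f\rangle_0\ge 0$. The reproducing identity $f(x)=\langle f,k(x,\cdot)\rangle_0$ holds by construction. Combined with Cauchy--Schwarz for positive semidefinite forms, it yields
\[
|f(x)|\le \|f\|_0\,k(x,x)^{1/2}.
\]
Hence $\|f\|_0=0$ forces $f\equiv 0$, so $\langle\cdot,\cdot\rangle_0$ is a genuine inner product.

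Next, complete $\cH_0$. Take a Cauchy sequence $(f_n)$ in $\cH_0$. The bound above shows that $(f_n(x))$ is Cauchy for every $x$, so $f_n$ converges pointwise to some function $f$. Let $\cH_k$ be the set of all such pointwise limits, with
\[
\langle f,g\rangle=\lim_n\langle f_n,g_n\rangle_0 .
\]

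\textbf{Main obstacle.} The delicate step is showing that this completion really consists of functions. Concretely, one must show that if $(f_n)$ is Cauchy in $\cH_0$ and $f_n\to 0$ pointwise, then $\|f_n\|_0\to 0$. I would argue as follows. Fix $\varepsilon>0$ and choose $N$ with $\|f_n-f_m\|_0<\varepsilon$ for $n,m\ge N$. Write $f_N=\sum_i a_i k(x_i,\cdot)$. Then
\[
\|f_n\|_0^2=\langle f_n-f_N,f_n\rangle_0+\sum_i a_i f_n(x_i).
\]
The first term is at most $\varepsilon\sup_m\|f_m\|_0$, and this supremum is finite because the sequence is Cauchy. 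The second term tends to $0$ as $n\to\infty$ because $f_n\to 0$ pointwise. Since $\varepsilon$ is arbitrary, $\|f_n\|_0\to 0$.

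Given this lemma, the limit inner product is well defined and independent of the chosen Cauchy sequences. The map from the abstract completion to functions is then injective, so $\cH_k$ is a Hilbert space of functions. The reproducing property passes to the limit: for $f=\lim f_n$,
\[
\langle f,k(x,\cdot)\rangle=\lim_n f_n(x)=f(x).
\]

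\textbf{Uniqueness.} Suppose $\cG$ is another RKHS on $D$ with reproducing kernel $k$. By the inclusion property, $\cH_0\subseteq\cG$, and the reproducing property makes the two inner products agree on $\cH_0$. Norm convergence in $\cG$ implies pointwise convergence, so the closure of $\cH_0$ in $\cG$ coincides with $\cH_k$ isometrically. Finally, if $g\in\cG$ is orthogonal to $\cH_0$, then $g(x)=\langle g,k(x,\cdot)\rangle_{\cG}=0$ for every $x$, so $g=0$. Hence $\cH_0$ is dense in $\cG$, and therefore $\cG=\cH_k$ with the same inner product.
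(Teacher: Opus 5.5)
Your proof is correct. It is the standard Moore--Aronszajn construction: an inner product on the span of the kernel sections, a completion realized by pointwise limits (via your lemma that a Cauchy sequence tending pointwise to zero tends to zero in norm), and uniqueness via density of the span in any competing RKHS. The paper gives no proof of its own and simply cites \cite{Aronszajn50}, whose argument is essentially the one you give.
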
 
As a consequence of Theorem~\ref{thm:moore}, we often refer to $\cH_k$ as the RKHS {\em
associated with} the kernel $k$. Subsequently, we relate the product of
kernels with the tensor product of their respective RKHSs. Recall that, for
separable Hilbert spaces $\cF$ and $\cG$, the {\em tensor product operator}
$f\otimes g: \cG\to \cF$ is defined as $(f\otimes g)(h) = \langle
g,h\rangle_\cG \, f$ for every $h\in \cG$.  Then the {\em tensor product
space} $\cF\otimes \cG \defeq \{f\otimes g: f\in \cF, g\in \cG\}$ equipped with the
inner product 
\begin{equation}\label{eq:tensorproductip} \langle f_1\otimes g_1,
f_2\otimes g_2\rangle_{\cF\otimes \cG} \defeq \langle f_1, f_2\rangle_\cF \langle
g_1,g_2\rangle_\cG, \quad f_1,f_2\in \cF, g_1,g_2\in \cG,
\end{equation} 
is an inner product space that is not necessarily complete. We denote by
$\cF\widehat{\otimes}\cG$ the Hilbert space obtained by taking the completion of
$\cF\otimes \cG$ with respect to the inner product in~\eqref{eq:tensorproductip}. 

The following result, which will be needed in the sequel, describes the RKHS
associated with the product of kernels. 
\begin{lemma}\label{lem:productkernel}
	For $i \in \{1,\ldots, p\}$, 
    let $\cX_i$ be a separable Hilbert space, $k_i: \cX_i\times \cX_i\to \bR$ a kernel, and $\cH_i$ the RKHS associated with $k_i$. 
    Define a map $k: \cX\times \cX\to \bR$ on the Cartesian product space $\cX = \bigtimes_{i=1}^p \cX_i$  by 
    \begin{equation} 
        k((x_1, \dots, x_p),(y_1,\dots, y_p)) := \prod_{i=1}^p k_i(x_i,y_i).
    \end{equation} 
    Then $k$ is a kernel and $\cH = \widehat{\otimes}_{i=1}^p \cH_i$ is the RKHS associated with $k$.
\end{lemma}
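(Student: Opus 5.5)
I would argue by induction on $p$, so the main work is the case $p=2$. The inductive step uses associativity of the completed tensor product: $\widehat{\otimes}_{i=1}^{p}\cH_i \cong \bigl(\widehat{\otimes}_{i=1}^{p-1}\cH_i\bigr)\widehat{\otimes}\cH_p$, and the product kernel factors as $k = k'\cdot k_p$ with $k'$ the product of the first $p-1$ kernels. So suppose $p=2$, with kernels $k_1$ on $\cX_1$ and $k_2$ on $\cX_2$.

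\emph{Step 1: $k$ is a kernel.} Symmetry is immediate from the symmetry of $k_1$ and $k_2$. For positive definiteness, take points $(x^j,y^j)\in\cX_1\times\cX_2$ and coefficients $c_j$, $j=1,\dots,n$. The Gram matrix of $k$ is the Hadamard product $K_1\circ K_2$ of the Gram matrices $K_1=[k_1(x^j,x^l)]$ and $K_2=[k_2(y^j,y^l)]$. By the Schur product theorem it is positive semidefinite.

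A direct way to see this is through the kernel trick \eqref{eq:kerneltrick} and the tensor inner product \eqref{eq:tensorproductip}. Write $\Phi_i$ for the canonical feature map of $k_i$. Then
\[
\sum_{j,l} c_j c_l\, k\bigl((x^j,y^j),(x^l,y^l)\bigr) = \Bigl\| \sum_j c_j\, \Phi_1(x^j)\otimes\Phi_2(y^j)\Bigr\|^2_{\cH_1\widehat{\otimes}\cH_2} \ge 0 .
\]

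\emph{Step 2: realize $\cH_1\widehat{\otimes}\cH_2$ as a space of functions with reproducing kernel $k$.} Define $\Phi(x,y) = \Phi_1(x)\otimes\Phi_2(y)\in\cH_1\widehat{\otimes}\cH_2$, and map each $F$ in the tensor space to the function
\[
(TF)(x,y) = \langle F, \Phi(x,y)\rangle .
\]
On elementary tensors the reproducing properties of $k_1$ and $k_2$ give $(T(f\otimes g))(x,y) = f(x)g(y)$. Also,
\[
\langle \Phi(x,y),\Phi(x',y')\rangle = k_1(x,x')\,k_2(y,y') = k\bigl((x,y),(x',y')\bigr),
\]
so $T\Phi(x,y) = k((x,y),\cdot)$. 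With the norm transported through $T$, the image $T(\cH_1\widehat{\otimes}\cH_2)$ therefore satisfies both inclusion and reproduction in Definition~\ref{def:RKHS}. By the uniqueness in Theorem~\ref{thm:moore}, this image is $\cH_k$.

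\emph{Step 3 (main obstacle): injectivity of $T$.} The argument only works if $T$ is injective, so that the completed tensor product genuinely is a space of functions and the norm can be transported. This requires that the span of $\{\Phi_1(x)\otimes\Phi_2(y)\}$ be dense in $\cH_1\widehat{\otimes}\cH_2$. I would prove density in two stages:
\begin{itemize}
\item the span of $\{\Phi_i(x)\}$ is dense in $\cH_i$, since any element orthogonal to all of them vanishes pointwise by reproduction;
\item elementary tensors $f\otimes g$ can then be approximated by tensors of spans, using the bilinear estimate $\|f\otimes g - f'\otimes g'\| \le \|f-f'\|\,\|g\| + \|f'\|\,\|g-g'\|$.
\end{itemize}
Since elementary tensors are dense in the completion, density of the span of the $\Phi(x,y)$ follows. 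Hence $F\perp\Phi(x,y)$ for all $(x,y)$ forces $F=0$, and $T$ is injective.

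Finally, one should note the convention that the abstract completion is identified with this function space, as is implicit in the statement.
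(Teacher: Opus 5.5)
The paper gives no proof of this lemma. It is stated in the preliminaries as a standard fact of RKHS theory going back to Aronszajn, so there is no argument of the authors' to compare yours against.

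Your proof is correct and is essentially the classical argument:
\begin{itemize}
\item positivity of $k$, via the Schur product theorem or equivalently $\sum_{j,l}c_jc_l\,k = \bigl\|\sum_j c_j\Phi_1(x^j)\otimes\Phi_2(y^j)\bigr\|^2$;
\item the map $TF(x,y)=\langle F,\Phi_1(x)\otimes\Phi_2(y)\rangle$, with the norm transported through $T$;
\item Moore--Aronszajn uniqueness.
\end{itemize}
You rightly single out injectivity of $T$ as the one point needing an argument. Without it, ``$\cH=\widehat{\otimes}_i\cH_i$ is the RKHS of $k$'' holds only up to a unitary identification, not as an equality of function spaces.

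A few small remarks.
\begin{enumerate}
\item It is the linear span of elementary tensors that is dense in the completion, not the set of elementary tensors itself. The paper's own definition of $\cF\otimes\cG$ as the set $\{f\otimes g\}$ has the same slip.
\item With the paper's convention that $f\otimes g$ is the rank-one operator $h\mapsto\langle g,h\rangle_{\cG}f$, the completion $\cH_1\widehat{\otimes}\cH_2$ is the space of Hilbert--Schmidt operators $\cH_2\to\cH_1$. In that model $\langle F,\Phi_1(x)\otimes\Phi_2(y)\rangle_{HS}=\langle\Phi_1(x),F\Phi_2(y)\rangle_{\cH_1}=(F\Phi_2(y))(x)$. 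So $TF\equiv 0$ forces $F\Phi_2(y)=0$ for every $y$, and then $F=0$ because the $\Phi_2(y)$ span a dense subspace of $\cH_2$. This gives injectivity in one line, without the bilinear approximation estimate.
\item Your $p=2$ argument never uses the Hilbert structure of the underlying sets. So applying it in the inductive step to $\cX_1\times\cdots\times\cX_{p-1}$, with the product kernel of the first $p-1$ factors and its RKHS identified with $\widehat{\otimes}_{i<p}\cH_i$ by the induction hypothesis, is legitimate. Alternatively, you can run the same argument directly with $p$-fold tensors $\Phi_1(x_1)\otimes\cdots\otimes\Phi_p(x_p)$ and avoid associativity altogether. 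The paper only defines the two-fold completed tensor product, so some such convention for $p>2$ is needed in any case.
\end{enumerate}
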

Henceforth, we refer to a kernel of the form Lemma~\ref{lem:productkernel} as a {\em
product kernel}.  
In the context of uncertainty quantification, this result allows us to describe
the RKHS associated with kernels defined on the input space for models of the
form $Y=f(X)$, where $X=(X_1, \dots, X_p)$. Analysis of the output $Y$ requires
either knowing or estimating the distributions for each $X_i$.
Therefore, a discussion of incorporating uncertainty in the RKHS framework is
crucial before presenting our problem setup. This is done in the next subsection.

\subsection{RKHS embeddings of probability measures}\label{sec:char} 
Let $D$ be a separable Hilbert space and consider the measurable space 
$(D, \cB(D))$. Let $k: D\times D\to \bR$ be a kernel with canonical feature map $\Phi$ and
associated RKHS $\cH$. 
%
%
%
We denote by $\mathcal{M}(D)$ the set of all probability
measures on $(D,\cB(D))$. Within $\mathcal{M}(D)$, the subset $\mathcal{M}_k(D)\defeq  \{\mathbb{P} \in \mathcal{M}(D) \ \vert \ \int_D \|\Phi(x)\|_{\cH} \bP(dx)<\infty  \}$ consists of all probability measures on $(D,\cB(D))$ for which the canonical feature map $\Phi$ is Bochner integrable. It should be noted that $\mathcal{M}(D) = \mathcal{M}_k(D)$ whenever the kernel $k$ is bounded. We define the {\em kernel mean embedding} (KME) with
respect to $k$ as the map $\eta_k: \mathcal{M}_k(D) \to \cH$ given by  
\begin{equation}
    \eta_k(\bP) := \int_D k(x,\cdot) \bP(dx), \quad \bP\in \mathcal{M}_k(D).
\end{equation} 
In general, the KME does not uniquely characterize elements of $\mathcal{M}_k(D)$.
It does, however, in the case where the kernel $k$ is characteristic, as defined 
next:
\begin{definition}\label{def:char} 
Let $D$ be a separable
Hilbert space and $k:D\times D\to \bR$ be a kernel with associated RKHS $\cH$.
We say that $k$ is {\em characteristic} if the KME $\eta_k: \mathcal{M}_k(D)\to
\cH$ is injective. 
\end{definition} 

It is known that the {\em Gaussian kernel} with {\em bandwidth} $\sigma\in \bR\setminus\{0\}$
given by \begin{equation} k_\sigma(x,x') := \exp\left(-\|x-x'\|_D^2/ 2\sigma^2
\right), \quad x,x'\in D,\end{equation} is characteristic when $D\subseteq
\bR^n$ \cite{Micchelli06, Sriperumbudur11, Szabo18}. Research on when kernels
defined on non-compact spaces are characteristic is at present relatively
scarce; however, Ziegel et al.~\cite[Theorem 3.1]{Ziegel24} showed that the Gaussian kernel
is characteristic on every separable Hilbert space. As such, we rely on Gaussian
kernels to inform our setup for kernel-based sensitivity analysis. 

The following result, which is needed in what follows, demonstrates that the product of Gaussian kernels defined on separable Hilbert spaces is characteristic on the product space.
\begin{lemma}\label{lem:productchar} For $i=1,\dots, p$, let $\cX_i$ be a separable Hilbert space, let $k_{\sigma_i}: \cX_i\times
\cX_i\to \bR$ be a Gaussian kernel with bandwidth parameter $\sigma_i$, and let $k := \prod_{i=1}^p k_{\sigma_i}$ be the
product kernel defined on $\cX=\bigtimes_{i=1}^p \cX_i$ as in
Lemma~\ref{lem:productkernel}. Then $k$ is a characteristic kernel. 
\end{lemma}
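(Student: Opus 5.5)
The key observation is that the product of Gaussian kernels is itself a Gaussian kernel on a suitably weighted product Hilbert space. Once that is in place, the claim follows from the result of Ziegel et al.~\cite[Theorem 3.1]{Ziegel24}, which says the Gaussian kernel is characteristic on every separable Hilbert space.

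\textbf{Step 1: rewrite the product kernel.} For $x=(x_1,\dots,x_p)$ and $x'=(x_1',\dots,x_p')$ in $\cX$,
\begin{equation*}
k(x,x') = \prod_{i=1}^p \exp\left(-\frac{\|x_i-x_i'\|_{\cX_i}^2}{2\sigma_i^2}\right) = \exp\left(-\sum_{i=1}^p \frac{\|x_i-x_i'\|_{\cX_i}^2}{2\sigma_i^2}\right).
\end{equation*}

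\textbf{Step 2: build the weighted Hilbert space.} Equip $\cX$ with the inner product
\begin{equation*}
\langle x,x'\rangle_{w} \defeq \sum_{i=1}^p \sigma_i^{-2}\langle x_i,x_i'\rangle_{\cX_i}.
\end{equation*}
Each $\sigma_i\neq 0$, so the weights are positive and finite, and this is an inner product. The induced norm is equivalent to the standard product norm $\big(\sum_i \|x_i\|_{\cX_i}^2\big)^{1/2}$, with constants $\min_i \sigma_i^{-1}$ and $\max_i \sigma_i^{-1}$. This equivalence gives three facts:
\begin{itemize}
\item $(\cX,\langle\cdot,\cdot\rangle_w)$ is complete;
\item it is separable, since it is a finite product of separable spaces;
\item its Borel $\sigma$-algebra equals the product Borel $\sigma$-algebra, which is also $\bigotimes_i \cB(\cX_i)$ by separability.
\end{itemize}
Consequently the set of probability measures $\mathcal{M}(\cX)$ is the same under either structure. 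Step 1 then reads $k(x,x')=\exp\left(-\|x-x'\|_w^2/2\right)$, a Gaussian kernel with bandwidth $1$ on the separable Hilbert space $(\cX,\|\cdot\|_w)$.

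\textbf{Step 3: apply Ziegel et al.} By \cite[Theorem 3.1]{Ziegel24}, $k$ is characteristic on $(\cX,\|\cdot\|_w)$: the KME is injective on $\mathcal{M}_k(\cX)$. Since $k$ is bounded by $1$, $\mathcal{M}_k(\cX)=\mathcal{M}(\cX)$. The KME $\eta_k$ depends only on the function $k$ and the measure, not on the choice of norm. Hence injectivity transfers verbatim to $\cX$ with its original product structure, and $k$ is characteristic there.

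The main obstacle is the bookkeeping in Step 2. One must check that changing to the equivalent weighted norm changes neither the measurable structure nor the class of probability measures on which characteristicness is asserted. One must also check that the RKHS of $k$, identified in Lemma~\ref{lem:productkernel} as $\widehat{\otimes}_i\cH_i$, is the same object regardless of which norm is placed on $\cX$. This holds because, by Theorem~\ref{thm:moore}, the RKHS is determined solely by the kernel function. With these points verified, no analytic argument beyond \cite{Ziegel24} is needed.
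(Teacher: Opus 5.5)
Your proposal is correct and follows the paper's proof: both rewrite the product of Gaussian kernels as a single Gaussian kernel on $\cX$ under an equivalent reweighted Hilbert norm and then invoke \cite[Theorem 3.1]{Ziegel24}. You normalize to bandwidth $1$ where the paper uses $1/\sqrt{2}$, and your checks that the Borel $\sigma$-algebra and the class of probability measures do not change are a useful addition that the paper leaves implicit.
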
 

\begin{proof}
For each space $(\cX_i, \|\cdot\|_{\cX_i})$, define the weighted norm $\|\cdot\|_{\cX_i, w_i}$ as follows:
\begin{equation}
	\|x_i\|_{\cX_i, w_i}\defeq \frac{1}{\sqrt{2} \sigma_i} \|x_i\|_{\cX_i}, \quad x_i\in \cX_i.
\end{equation} Since $\sigma_i \neq 0$, the norms $ \|\cdot\|_{\cX_i}$ and $\|\cdot\|_{\cX_i, w_i}$ are equivalent on $\cX_i$. As such, $(\cX_i, \|\cdot\|_{\cX_i, w_i})$ is a separable Hilbert space. It follows that the product space $(\cX, \|\cdot\|_{\cX, w})$, where 
\begin{equation}
	\|x\|_{\cX, w}^2 = \sum_{i=1}^p \|x_i\|_{\cX_i, w_i}^2, \quad x\in \cX,
\end{equation} is also a separable Hilbert space; see, e.g.,~\cite[Ex. 10, pg. 195]{munkres2000}. Then for any $x,x'\in \cX$, 
\begin{equation}\label{eq:gaussprod}
	k(x,x') = \prod_{i=1}^p k_{\sigma_i}(x_i,x_i') = \exp\left(-\sum_{i=1}^p \|x_i-x_i'\|_{\cX_i, w_i}^2\right) = \exp(-\|x-x'\|_{\cX, w}^2).
\end{equation} As such,~\eqref{eq:gaussprod} shows that $k$ is a Gaussian kernel with bandwidth $\sigma = 1/\sqrt{2}$ defined on a separable Hilbert space and is therefore characteristic (per~\cite[Theorem 3.1]{Ziegel24}).
\end{proof}

\subsection{Problem setup and the HSIC}\label{rem:setup}
For the remainder of this article, we assume the following setup.  We consider a
mathematical model $$Y = f(X), \quad X = (X_1,\dots, X_p),$$ as a measurable map
on a probability space $(\Omega, \cF, \bP)$. 
For each $i \in \{1, \ldots, p\}$, the input $X_i$ is a random variable, $X_i:
(\Omega, \cF, \bP) \to (\cX_i, \cB(\cX_i))$, where $\cX_i$ is a separable
Hilbert space and $\cB(\cX_i)$ is the corresponding Borel $\sigma$-algebra.
Similarly, the output is a random variable $Y:(\Omega, \cF, \bP) \to (\cY,
\cB(\cY))$, where $\cY$ is a separable Hilbert space equipped with the Borel $\sigma$-algebra $\cB(\cY)$. We denote the respective laws of the inputs and the output by 
$\bP_{X_i}$ and $\bP_Y$. 

For an index subset $A\subseteq
\{1,\dots, p\}$, we define $X_A = (X_i)_{i\in A}$ as a random variable that
takes values in the product space $\cX_A = \bigtimes_{i\in A} \cX_i$ equipped with
the product $\sigma$-algebra $\cB(\cX_A)$ and product law $\bP_{X_A}$. For ease
of notation, we omit the brackets from all relevant subscripts when $A = \{i\}$;
similarly, we remove the subscripts entirely when $A = \{1,\dots, p\}$. Finally,
we equip $\cX_A$ with a product kernel $k_A:=\prod_{i\in A}
k_i$ and $\cY$ with a kernel $\ell$, assuming also that $\bP_{X_A}\in \mathcal{M}_{k_A}(\cX_i)$ and $\bP_{Y}\in \mathcal{M}_\ell(\cY)$. We denote the associated
RKHSs by $\cH_A$ and $\cG$, respectively. A depiction of the problem setup can be 
found in Figure~\ref{fig:diagram}.

This setup provides the probabilistic foundation for kernel-based sensitivity
analysis.  
A core idea in this context is to construct a cross-covariance
operator $C_{X_A Y}: \cG \to \cH_A$ given by 
\begin{equation}\label{eq:cross_covariance_operator}
    \begin{aligned} 
        C_{X_AY} &\defeq \int_{\cX_A\times \cY} (\Phi_{k_A}(x) - M_{\cX_A}) \otimes (\Phi_\ell(y) - M_\cY)\ \bP_{X_A,Y}(dx,dy)\\ 
        &= \int_{\cX_A \times \cY} \Phi_{\cX_A}(x) \otimes \Phi_\cY(y) \ \bP_{X_A,Y}(dx,dy) - M_{\cX_A} \otimes M_{\cY},
    \end{aligned}
\end{equation} 
where $\Phi_{X_A}$ and $\Phi_Y$ are the canonical feature maps of $k_A$ and $\ell$, respectively, and $\bP_{X_A,Y}$ denotes the joint law of $(X_A,Y)$. Under the assumptions of Section \ref{rem:setup}, it is straightforward to show that $C_{X_AY}$ is a Hilbert--Schmidt operator \cite{daPrato06}.

\begin{figure}[htbp]
\centering
\includegraphics[width=0.9\textwidth]{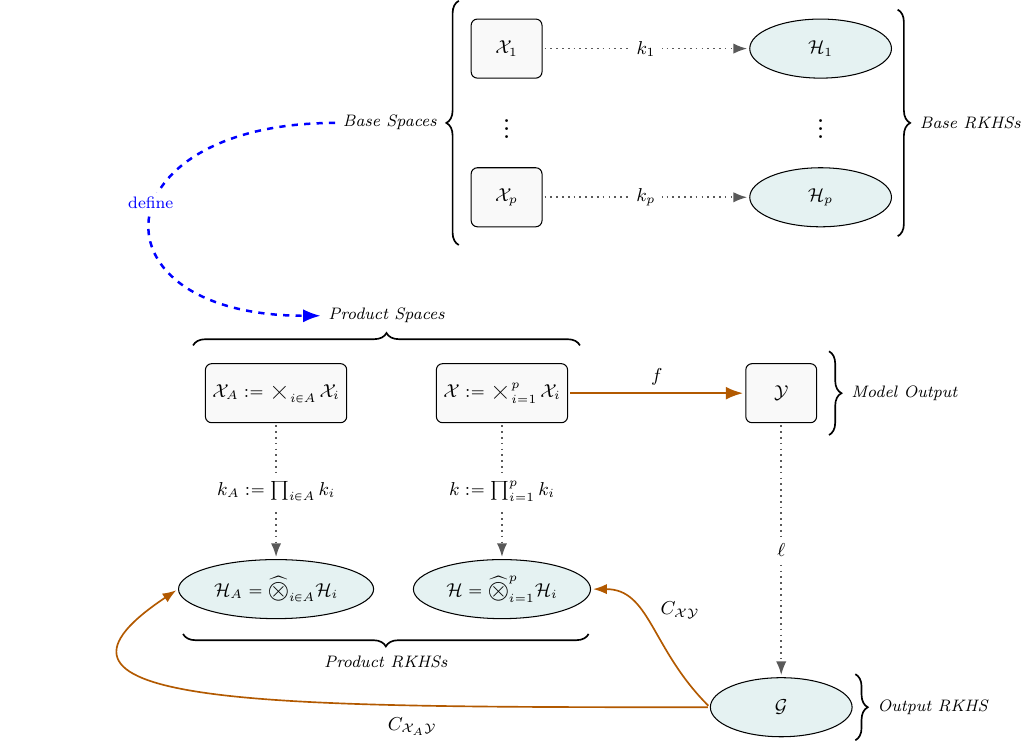}
\caption{A depiction of the problem setup.} 
\label{fig:diagram}
\end{figure}

 \begin{definition}\label{def:HSIC} The Hilbert--Schmidt independence criterion (HSIC) between $X_A$ and $Y$ is denoted by $\HSIC(X_A,Y)$ and defined by the squared Hilbert--Schmidt norm of $C_{X_AY}$; that is, \begin{equation}\HSIC(X_A,Y) := \|C_{X_AY}\|_{HS}^2.\end{equation}\end{definition} 

The authors of \cite{Gretton05a} use the kernel trick to show that the HSIC
between $X_A$ and $Y$ can be completely characterized by the kernels endowed
upon the spaces $\cX_A$ and $\cY$. 
This establishes the HSIC as a
kernel-based tool to characterized independence between $X_A$ and $Y$, 
as summarized in the following result: 
\begin{theorem}\label{thm:HSICind} 
Let $k_A$ and $\ell$ be characteristic kernels on $\cX_A$ and $\cY$,
respectively. Then,  
$$\HSIC(X_A,Y)=0 \quad \text{if and only if} \quad \bP_{X_A,Y} = \bP_{X_A}\otimes \bP_Y.$$
\end{theorem}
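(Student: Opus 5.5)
The plan is to recast $\HSIC(X_A,Y)$ as the squared RKHS distance between two kernel mean embeddings on the product space $\cX_A\times\cY$, and then use that the product kernel $k_A\ell$ is characteristic. Set $\kappa((x,y),(x',y'))\defeq k_A(x,x')\,\ell(y,y')$. By Lemma~\ref{lem:productkernel}, $\kappa$ is a kernel with RKHS $\cH_A\widehat{\otimes}\cG$, and its canonical feature map is $(x,y)\mapsto\Phi_{k_A}(x)\otimes\Phi_\ell(y)$.

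\textbf{Step 1.} Identify the Hilbert--Schmidt operators $\cG\to\cH_A$ isometrically with $\cH_A\widehat{\otimes}\cG$. This is standard: rank-one operators $f\otimes g$ have HS inner products given by \eqref{eq:tensorproductip}, and both spaces are the completion of the finite-rank span. Under this identification, the second line of \eqref{eq:cross_covariance_operator} reads
\[
C_{X_AY}=\eta_\kappa(\bP_{X_A,Y})-\eta_\kappa(\bP_{X_A}\otimes\bP_Y).
\]
Here the identity $M_{\cX_A}\otimes M_\cY=\int \Phi_{k_A}(x)\otimes\Phi_\ell(y)\,(\bP_{X_A}\otimes\bP_Y)(dx,dy)$ follows from Fubini for Bochner integrals, and the Bochner integrability of the joint feature map uses $\|\Phi_{k_A}(x)\otimes\Phi_\ell(y)\|=\|\Phi_{k_A}(x)\|\,\|\Phi_\ell(y)\|$. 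It therefore follows that
\[
\HSIC(X_A,Y)=\|\eta_\kappa(\bP_{X_A,Y})-\eta_\kappa(\bP_{X_A}\otimes\bP_Y)\|^2_{\cH_A\widehat{\otimes}\cG}.
\]

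\textbf{Step 2.} The direction ``$\Leftarrow$'' is now immediate: if $\bP_{X_A,Y}=\bP_{X_A}\otimes\bP_Y$, the two embeddings coincide and the HSIC vanishes. For ``$\Rightarrow$'', HSIC $=0$ forces the two embeddings to agree. So it suffices to know that $\kappa$ is characteristic on $\cX_A\times\cY$, since then injectivity of $\eta_\kappa$ gives equality of the two measures.

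\textbf{Step 3 (main obstacle).} The hard part is showing that $\kappa$ is characteristic. Characteristic factors do not in general give a characteristic product kernel, so the hypothesis must be used carefully. In the setting of this paper the kernels are Gaussian: $k_A$ is a product of Gaussians and $\ell$ is Gaussian. In that case Lemma~\ref{lem:productchar}, applied to the $|A|+1$ Gaussian factors on $\cX_A\times\cY$, shows directly that $\kappa$ is characteristic, and this is the route I would write down.

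For general characteristic $k_A$ and $\ell$, I would not claim joint characteristicness. Instead I would exploit the special structure of the comparison, where one measure is a product of the marginals of the other. Vanishing of $C_{X_AY}$ means $\mathrm{Cov}(f(X_A),g(Y))=0$ for all $f\in\cH_A$ and $g\in\cG$. Fix $g$ and define the signed measure $\nu_g(dx)=\int g(y)\,(\bP_{X_A,Y}-\bP_{X_A}\otimes\bP_Y)(dx,dy)$. Its embedding is zero and its total mass is zero. Characteristic kernels typically kill such signed measures, which would give $\mathbb E[\1_B(X_A)\,g(Y)]=\bP(X_A\in B)\,\mathbb E g(Y)$. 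Repeating the argument in $y$ with $\ell$ yields independence. This last extension, from probability measures to mass-zero signed measures, is where the genuine subtlety lies (it is the Gretton et al.\ argument), and I would cite \cite{Gretton05a,Gretton05b} for that case.
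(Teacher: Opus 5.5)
\textbf{Verdict: the proposal has a gap in Step~3, but it is small and easily closed.} The paper gives no proof of this statement; it imports it from \cite{Gretton05a}. So your proposal has to stand on its own.

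\textbf{Steps~1--2 are correct.} $\HSIC(X_A,Y)$ is the squared distance between the $\kappa$-embeddings of $\bP_{X_A,Y}$ and $\bP_{X_A}\otimes\bP_Y$. This needs $\int\|\Phi_{k_A}(x)\|\,\|\Phi_\ell(y)\|\,\bP_{X_A,Y}(dx,dy)<\infty$. That condition does not follow from the marginal conditions alone, but it holds for bounded kernels and is implicit whenever $C_{X_AY}$ is written down.

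\textbf{The gap is in Step~3.} The route you commit to only proves a special case. As you say yourself, for arbitrary characteristic $k_A,\ell$ the kernel $\kappa$ need not be characteristic, so the reduction of Step~2 cannot carry the theorem as stated. Also, the kernels the paper ultimately uses are the augmented $k_i^*$ and $k_Y^*$, not Gaussians. For those, the analogous route goes through Theorem~\ref{thm:prodchar} applied to the $|A|+1$ factors on $\cX_A\times\cY$, not Lemma~\ref{lem:productchar}; this is not circular. For the general case, everything rests on the sentence ``characteristic kernels typically kill such signed measures,'' which you leave unproved and defer to Gretton et al. Their characterization is stated for universal kernels on compact domains and proved via density of the RKHS in $C(\cX)$. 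It therefore does not cover characteristic kernels on separable Hilbert spaces, and the paper's own citation has the same mismatch.

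\textbf{The missing step is elementary and holds without any ``typically''.} Claim: if $k$ is characteristic on $D$ and $\nu$ is a finite signed measure with $\nu(D)=0$, $\int\|\Phi_k\|\,d|\nu|<\infty$ and $\int\Phi_k\,d\nu=0$, then $\nu=0$. Take the Jordan decomposition $\nu=\nu^+-\nu^-$ with $\nu^+\perp\nu^-$, and set $c\defeq\nu^+(D)=\nu^-(D)$. If $c>0$, the probability measures $\nu^\pm/c$ lie in $\mathcal{M}_k(D)$, and their embeddings differ by $c^{-1}\int\Phi_k\,d\nu=0$. Hence they coincide, contradicting mutual singularity.

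\textbf{Applying the claim.} First apply it on $\cX_A$ to
$\nu_g(B)\defeq\mathbb{E}[\1_B(X_A)g(Y)]-\bP(X_A\in B)\,\mathbb{E}[g(Y)]$.
This measure has mass zero, and $\int f\,d\nu_g=\langle f,C_{X_AY}g\rangle_{\cH_A}=0$ for all $f\in\cH_A$, so $\int\Phi_{k_A}\,d\nu_g=0$. Integrability follows from $|g(y)|\le\|g\|_{\cG}\|\Phi_\ell(y)\|_{\cG}$ together with the joint moment condition above. Hence $\nu_g=0$ for every $g\in\cG$.

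Next, fix a Borel set $B\subseteq\cX_A$ and apply the claim on $\cY$ to
$\mu_B(C)\defeq\bP(X_A\in B,\,Y\in C)-\bP(X_A\in B)\,\bP(Y\in C)$.
This measure has mass zero, satisfies $\int\|\Phi_\ell\|\,d|\mu_B|\le 2\,\mathbb{E}\|\Phi_\ell(Y)\|_{\cG}<\infty$, and $\int g\,d\mu_B=\nu_g(B)=0$ for all $g\in\cG$. Hence $\mu_B=0$. The two laws therefore agree on measurable rectangles, and so everywhere by the $\pi$--$\lambda$ theorem.

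For the hard direction, this argument bypasses Steps~1--2 and never needs $\kappa$ to be characteristic. It uses exactly the structure you singled out: a law compared with the product of its own marginals. Written out, it proves the theorem in the stated generality. Lead with it, and keep the Gaussian/augmented route as a remark.
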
 


When using characteristic kernels, the cross-covariance operator
fully encodes the joint distribution, and its vanishing indicates that the joint
law factorizes. It follows from Lemma~\ref{lem:productchar} that equipping separable
Hilbert spaces with Gaussian kernels allows HSIC to generalize the classical
idea that zero covariance implies independence. In the following section, we
will show that such a setup is not enough to guarantee the monotonicity of the
HSIC under marginalization.


\section{Monotonicity of the HSIC under marginalization}\label{sec:monotonicity}
The discussion in the previous section motivates the use of HSIC as a
sensitivity measure. Namely, one may consider the \emph{raw HSIC score}
$\HSIC(X_A,Y)$ as an indication of the influence of $X_A$ upon $Y$. 
While appealing in its simplicity, the raw HSIC is difficult to interpret.  In
particular, a large raw HSIC score has no direct interpretation as “influence”
of an input. 
To address this limitation, normalized variants have been 
proposed. The prevalent approach, suggested in~\cite{daVeiga15}, 
is to use the {\em distance correlation index} between $X_A$ and $Y$ given by
\begin{equation}\label{eq:dcorr}
    R_{X_A}^2(Y) \defeq \frac{\HSIC(X_A,Y)}{\sqrt{\HSIC(X_A,X_A)\HSIC(Y,Y)}}.
\end{equation} 
This rescaling provides a correlation-like interpretation; see~\cite{daVeiga15}
for details. 
Despite their utility, 
these
indices suffer from a lack of a ranking ``yardstick" due to the fact that the
normalization in \eqref{eq:dcorr} depends on $X_A$. 
Furthermore, the definition of the distance correlation indices does not ensure
that $R_{X_i}^2(Y) \le R_{X_{\{i,j\}}}^2(Y)$; that is, the normalization masks
how dependence behaves under marginalization.

Without monotonicity under marginalization, we cannot construct analogues of
Sobol' indices, nor can we interpret HSIC-based indices as
fractions of a quantity explaining the contributions of all the inputs
to the model output.
In what follows, we show how a suitable kernel construction ensures
monotonicity. We require that, for $A, B \subseteq \{1, \ldots, p\}$,
\[
\HSIC(X_A,Y)\le \HSIC(X_B,Y), \quad \text{whenever} \quad A \subseteq B.
\]
When satisfied, this property lays the foundation for
defining the first-order and total HSIC sensitivity indices (see Section~\ref{sec:totalHSIC}),
which are bounded between 0 and 1 and provide a ranking mechanism to compare the
relative importance of parameter subsets.

\subsection{Augmented kernel construction}
As it will become apparent in the discussion that follows, a critical component
of our analysis is that the constant function $\1$ must belong to the RKHS
generated from $\cX_i$ for all $i\in \{1,\dots, p\}$. We also require that the
kernels $k_i$ defined on $\cX_i$ are characteristic so that 
Theorem~\ref{thm:HSICind} holds.

It is natural to wonder if Gaussian kernels
ensure monotonicity under marginalization; however, it is well-established that
the RKHS associated with a Gaussian kernel does not contain the constant
function $\1$ \cite{Steinwert06}.  
We provide a two-part remedy for this issue. 
With Gaussian kernels in mind as a starting point, we construct a class of {\em
augmented kernels} that are characteristic on every separable Hilbert space and
generate RKHSs that contain the constant function $\1$. We then use the
augmented kernels to state and prove Theorem~\ref{thm:mono}, the core result of
this article, which guarantees monotonicity of the HSIC under marginalization. 

The following result from \cite{Durrande11} informs our kernel construction. Although the cited reference states the result for $\cX\subset \bR$, the proof extends verbatim to arbitrary separable Hilbert spaces.

\begin{lemma}[\cite{Durrande11}, Proposition 1]\label{lem:RKHSdecomp}
	Let $\cX$ be a separable Hilbert space, and equip the probability space $(\cX, \cB(\cX), \bP_X)$ with the kernel $k$. Assume that $\bP_X\in \mathcal{M}_k(\cX)$, and denote the associated RKHS by $\cH$. Then $\cH$ admits the orthogonal decomposition 
	\begin{equation}\label{eq:RKHSdecomp}  
	\cH = \cH^1 \oplus \cH^0, 
	\end{equation} where $\cH^0$ is an RKHS of zero-mean functions with respect to $\bP_X$, and $\cH^1$ is an RKHS with dimension of at most one.
\end{lemma}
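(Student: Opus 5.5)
The plan is to build the decomposition from the linear functional ``expectation under $\bP_X$'' on $\cH$.

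First I will show that $E: \cH\to\bR$, $E(f) = \int_\cX f(x)\,\bP_X(dx)$, is a well-defined bounded linear functional. By the reproduction property, $|f(x)| = |\langle f, \Phi(x)\rangle_\cH| \le \|f\|_\cH\,\|\Phi(x)\|_\cH$. The assumption $\bP_X\in\mathcal{M}_k(\cX)$ then gives $|E(f)|\le \|f\|_\cH \int_\cX \|\Phi(x)\|_\cH\,\bP_X(dx)<\infty$. Because $\Phi$ is Bochner integrable, the kernel mean embedding $m \defeq \eta_k(\bP_X)\in\cH$ exists. Since Bochner integrals commute with bounded linear functionals, $E(f) = \int_\cX \langle f,\Phi(x)\rangle_\cH\,\bP_X(dx) = \langle f, m\rangle_\cH$, so $m$ is the Riesz representer of $E$.

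Next I set $\cH^0 \defeq \ker E = \{m\}^\perp$ and $\cH^1 \defeq \operatorname{span}\{m\}$. Then $\cH^0$ is a closed subspace consisting exactly of the zero-mean functions in $\cH$, and $\cH^1$ has dimension at most one; it is $\{0\}$ precisely when $m=0$. Orthogonal decomposition in Hilbert space gives $\cH = \cH^1\oplus\cH^0$.

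It remains to check that each summand is an RKHS. A closed subspace $\mathcal{S}\subseteq\cH$ with orthogonal projection $P$ is an RKHS with kernel $k_{\mathcal{S}}(x,y) = \langle P\Phi(x), P\Phi(y)\rangle_\cH$. To see this, note that for $f\in\mathcal{S}$ we have $f(x) = \langle f,\Phi(x)\rangle_\cH = \langle f, P\Phi(x)\rangle_\cH$, and $P\Phi(x)$ is the function $y\mapsto k_{\mathcal{S}}(x,y)$. Concretely, for $m\neq 0$ I will write down
\[
k^1(x,y) = \frac{m(x)\,m(y)}{\|m\|_\cH^2}, \qquad k^0(x,y) = k(x,y) - \frac{m(x)\,m(y)}{\|m\|_\cH^2},
\]
where $\|m\|_\cH^2 = \iint k\,d\bP_X\,d\bP_X$ and $m(x) = \int k(x,s)\,\bP_X(ds)$. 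This recovers the formula of \cite{Durrande11}.

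The only delicate step is the interchange $E(f) = \langle f,m\rangle_\cH$. In the cited reference's setting it is immediate from boundedness, but here the kernel may be unbounded, so the argument must rest on Bochner integrability, which is exactly the hypothesis $\bP_X\in\mathcal{M}_k(\cX)$. Separability of $\cX$ is used only to make Borel measurability of $\Phi$ and Pettis' theorem applicable, so the scalar-case proof carries over verbatim.
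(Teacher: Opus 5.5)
Your proposal is correct and follows essentially the Riesz-representer argument of \cite{Durrande11}, which the paper invokes without reproducing: the mean functional $E(f)=\langle f,m\rangle_\cH$ is bounded thanks to $\bP_X\in\mathcal{M}_k(\cX)$, one takes $\cH^0=\ker E$ and $\cH^1=\operatorname{span}\{m\}$, and both kernels come from orthogonal projection. As a minor remark, the interchange you call delicate can be bypassed, since boundedness of $E$ already gives a Riesz representer $m$ with $m(y)=E(\Phi(y))=\int_\cX k(y,s)\,\bP_X(ds)$; also, measurability of $\Phi$ comes from the Bochner integrability built into the definition of $\mathcal{M}_k(\cX)$ (or from continuity of $k$), not from separability of $\cX$ alone.
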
 


In what follows, we first
describe how to transform any given kernel $k$ into one whose associated RKHS consists
only of mean-zero functions with respect to $\bP_X$. We then show that this
space $\cH^0$ does not contain constant functions and demonstrate the
orthogonality described in \eqref{eq:RKHSdecomp} between $\cH^0$ and
$\text{span}\{\1\}$. Finally, we define an augmented kernel as one whose
associated RKHS admits the desired decomposition $\cH = \text{span} \{\1\} \oplus
\cH^0$. We will show that augmented kernels are a correct choice in guaranteeing
monotonicity of the HSIC under marginalization.

\begin{definition}\label{def:centeredker} 
Let $(\cX, \cB(\cX), \bP_X)$ be a probability space and $k:\cX\times \cX\to \bR$ be a kernel. The {\em centering} of $k$ with respect to $\bP_X$ is given by $k_c = k_c(k,\bP_X): \cX\times \cX\to \bR$, where \begin{equation}\label{eq:center} k_c(x,x') \defeq k(x,x') - \int_{\cX} k(x,s) \bP_X(ds) - \int_{\cX} k(x',s) \bP_X(ds) + \int_{\cX}\int_{\cX} k(s,t) \bP_X(ds)\bP_X(dt).\end{equation}
\end{definition}

First, we show that $k_c$ is a kernel on $\cX$. 

\begin{lemma}\label{lem:kcisker}
	Let $(\cX, \cB(\cX), \bP_X)$ be a probability space and $k:\cX\times \cX\to \bR$ be a kernel. If $\bP_X\in \mathcal{M}_k(\cX)$, then the centering of $k$ with respect to $\bP_X$ is a kernel on $\cX$.
\end{lemma}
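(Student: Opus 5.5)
We prove the claim by writing $k_c$ as a Gram kernel of a centered feature map; symmetry and positive definiteness then follow from the kernel trick~\eqref{eq:kerneltrick}. Let $\Phi$ be the canonical feature map of $k$ with RKHS $\cH$. Since $\bP_X\in\mathcal{M}_k(\cX)$, $\Phi$ is Bochner integrable, so the kernel mean embedding $\mu\defeq\eta_k(\bP_X)=\int_\cX \Phi(s)\,\bP_X(ds)\in\cH$ is well defined. Set $\Phi_c(x)\defeq \Phi(x)-\mu$.

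\textbf{Step 1: the four terms of~\eqref{eq:center} are inner products.} Bochner integrals commute with bounded linear functionals, so for each $x\in\cX$,
\[
\langle \Phi(x),\mu\rangle_\cH=\int_\cX \langle\Phi(x),\Phi(s)\rangle_\cH\,\bP_X(ds)=\int_\cX k(x,s)\,\bP_X(ds).
\]
Applying the same fact twice gives
\[
\langle\mu,\mu\rangle_\cH=\int_\cX\int_\cX k(s,t)\,\bP_X(ds)\bP_X(dt).
\]
In particular, all integrals appearing in~\eqref{eq:center} are finite. This is justified by $|k(x,s)|\le \|\Phi(x)\|_\cH\|\Phi(s)\|_\cH$ together with the integrability assumption, which also gives the required measurability and Fubini-type manipulations.

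\textbf{Step 2: identify $k_c$ as a Gram kernel.} Expanding the inner product and using Step 1 term by term yields
\[
k_c(x,x')=\langle \Phi_c(x),\Phi_c(x')\rangle_\cH \quad\text{for all } x,x'\in\cX.
\]

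\textbf{Step 3: kernel properties.} Symmetry of $k_c$ is immediate from the symmetry of the inner product. For positive (semi)definiteness, take any $x_1,\dots,x_n\in\cX$ and $a\in\bR^n$. Then
\[
\sum_{i,j}a_ia_j k_c(x_i,x_j)=\Bigl\|\sum_i a_i\Phi_c(x_i)\Bigr\|_\cH^2\ge 0.
\]
Hence $k_c$ is a kernel, in the positive-semidefinite sense in which the paper uses the term.

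\textbf{Main obstacle.} The only delicate point is Step 1: justifying that $\mu$ exists and that the inner product passes through the integral. This is exactly what the hypothesis $\bP_X\in\mathcal{M}_k(\cX)$ provides, since the Bochner integrability of $\Phi$ gives $\mu\in\cH$ and the interchange with continuous linear functionals. Once $\mu$ is in hand, the remaining steps are routine algebra.
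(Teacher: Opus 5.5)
Your proposal is correct and takes the same route as the paper: both use $\eta_k(\bP_X)\in\cH$ to write $k_c(x,x')=\langle \Phi(x)-\eta_k(\bP_X),\Phi(x')-\eta_k(\bP_X)\rangle_\cH$, from which symmetry and positive (semi)definiteness follow. Your Step~1 spells out the interchange of the inner product with the Bochner integral, which the paper invokes only as ``linearity of the inner product and Bochner integral.''
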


\begin{proof}
It is immediate from \eqref{eq:center} that $k_c$ is symmetric on $\cX$.  It
therefore suffices to show that $k_c$ is positive definite.  
This becomes clear once we express $\Phi_c$, the canonical feature map of $k_c$,
in terms of $\Phi_k$, the canonical feature map of $k$.
Since $\bP_X\in \mathcal{M}_k(\cX)$, we know that
$\eta_k(\bP_X)$ is an element of the RKHS $\cH$ associated with $k$. Using
linearity of the inner product and Bochner integral, it follows that for every
$x,x'\in \cX$, 
\begin{align*}
	k_c(x,x') &= \langle \Phi_k(x), \Phi_k(x')\rangle_\cH - \langle \Phi_k(x), \eta_k(\bP_X)\rangle_\cH - \langle \Phi_k(x;), \eta_k(\bP_X)\rangle_\cH + \|\eta_k(\bP_X)\|_{\cH}^2\\
	&= \langle \Phi_k(x) - \eta_k(\bP_X), \Phi_k(x') - \eta_k(\bP_X)\rangle_\cH \\
	&= \langle \Phi_{c}(x), \Phi_c(x') \rangle_\cH, 
\end{align*} 
where 
$\Phi_{c}(x) \defeq \Phi_k(x) - \eta_k(\bP_X)$ for every $x\in \cX$.
The positive definiteness of $k_c$ follows from this relation.
\end{proof}

The following result shows that the canonical feature map associated with $k_c$
is mean-zero with respect to $\bP_X$.

\begin{lemma}\label{lem:KMEcenter} 
Let $(\cX, \cB(\cX), \bP_X)$ be a probability space and $k:\cX\times \cX\to \bR$ be a kernel. Assume that $\bP_X\in \mathcal{M}_k(\cX)$. Let $k_c$ be the centering of $k$ with respect to $\bP_X$ and denote by $\Phi_c$ the canonical feature map associated with $k_c$. Then $\int_{\cX} \Phi_c(x) \bP_X(dx) = 0$.
\end{lemma}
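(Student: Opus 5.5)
The plan is to reduce the statement to the explicit form of $\Phi_c$ obtained in the proof of Lemma~\ref{lem:kcisker}, namely $\Phi_c(x) = \Phi_k(x) - \eta_k(\bP_X)$, and then integrate. One subtlety must be settled first. The canonical feature map of $k_c$ is, strictly speaking, $x\mapsto k_c(x,\cdot)$, which lives in the RKHS $\cH_c$ of $k_c$. By contrast, $\Phi_k(x)-\eta_k(\bP_X)$ lives in $\cH$.

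I would reconcile the two directly. Evaluating $\Phi_k(x)-\eta_k(\bP_X)$ at a point $x'$ gives $k(x,x') - \int_\cX k(x',s)\,\bP_X(ds)$ by the reproducing property. This differs from $k_c(x,x')$ only by the function $x'\mapsto \int k(x',s)\bP_X(ds) - \int\int k\,d\bP_X d\bP_X$, which does not depend on $x$. So either formulation will lead to the same averaging argument. I would work with the representation used in Lemma~\ref{lem:kcisker}, $\Phi_c(x)=\Phi_k(x)-\eta_k(\bP_X)\in\cH$, since the kernel trick $k_c(x,x')=\langle\Phi_c(x),\Phi_c(x')\rangle_\cH$ was established there. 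For completeness I would also check the statement pointwise for $k_c(x,\cdot)$.

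\textbf{Bochner integrability.} The next step is to verify that $\Phi_c$ is Bochner integrable with respect to $\bP_X$. We have $\|\Phi_c(x)\|_\cH \le \|\Phi_k(x)\|_\cH + \|\eta_k(\bP_X)\|_\cH$. The first term is integrable because $\bP_X\in\mathcal{M}_k(\cX)$. The second is a finite constant, finite because $\|\eta_k(\bP_X)\|_\cH\le\int\|\Phi_k\|\,d\bP_X$, and it integrates to itself since $\bP_X$ is a probability measure. Measurability is inherited from $\Phi_k$.

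\textbf{The main computation.} By linearity of the Bochner integral,
\[
\int_\cX \Phi_c(x)\,\bP_X(dx) = \int_\cX \Phi_k(x)\,\bP_X(dx) - \eta_k(\bP_X)\,\bP_X(\cX) = \eta_k(\bP_X)-\eta_k(\bP_X) = 0 .
\]
For the pointwise version with $k_c(x,\cdot)$, I would integrate \eqref{eq:center} in $x$ for fixed $x'$ using Fubini. The four terms then give $m(x') - \mu - m(x') + \mu = 0$, where $m(x')=\int k(x',s)\bP_X(ds)$ and $\mu=\iint k\,d\bP_X d\bP_X$. The integrability justifying this follows from $|k(x,s)|\le\|\Phi_k(x)\|\|\Phi_k(s)\|$.

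\textbf{Main obstacle.} I expect the only real difficulty to be the identification issue above: making precise in which space the canonical feature map of $k_c$ lives. The integral computation itself is routine. To handle this, I would note that the Bochner integral commutes with point evaluation, because point evaluations are continuous linear functionals on an RKHS. Hence showing that the integral vanishes pointwise suffices in either space.
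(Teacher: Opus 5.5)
Your proposal is correct in the end. Its decisive step is the same as the paper's proof: integrate the defining formula for $k_c$ in $x$ for fixed $x'$, obtaining $m(x')-\mu-m(x')+\mu=0$, with Fubini justified by $|k(x,s)|\le\|\Phi_k(x)\|_\cH\|\Phi_k(s)\|_\cH$. The paper expands $\int_\cX k_c(x,\cdot)\,\bP_X(dx)$ into four terms and cancels them in pairs using the symmetry of $k$. Your version adds the integrability justification that the paper omits.

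The paragraph reconciling the two feature maps, however, is wrong as written. With your $m$ and $\mu$, the reproducing property gives $(\Phi_k(x)-\eta_k(\bP_X))(x')=k(x,x')-m(x')$. Since $k_c(x,x')=k(x,x')-m(x)-m(x')+\mu$, we get
\[
k_c(x,x')-\bigl(\Phi_k(x)-\eta_k(\bP_X)\bigr)(x')=\mu-m(x).
\]
This is constant in $x'$ but \emph{does} depend on $x$, which is the opposite of what you claim. Your inference is also backwards. A nonzero difference independent of $x$ would survive averaging over $x$ and make the two integrals disagree. They agree precisely because $\int_\cX(\mu-m(x))\,\bP_X(dx)=0$.

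Consequently, the computation $\int_\cX(\Phi_k(x)-\eta_k(\bP_X))\,\bP_X(dx)=\eta_k(\bP_X)-\eta_k(\bP_X)=0$ in $\cH$ is, on its own, about a different feature map. It is not about the canonical one, $x\mapsto k_c(x,\cdot)$ in $\cH_c$ (the paper's $\cH^0$), and your reasoning does not transfer it. There are two correct fixes.

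\begin{enumerate}
\item Rest the proof on the pointwise computation, as the paper does. Bochner integrability in $\cH_c$ follows from $\|k_c(x,\cdot)\|_{\cH_c}=\sqrt{k_c(x,x)}=\|\Phi_k(x)-\eta_k(\bP_X)\|_\cH$ together with your bound.
\item Keep the $\cH$-route but transfer it properly with the standard feature-space map $V:\cH\to\cH_c$, $(Vw)(x')=\langle w,\Phi_k(x')-\eta_k(\bP_X)\rangle_\cH$. This map is bounded with norm at most $1$, sends $\Phi_k(x)-\eta_k(\bP_X)$ to $k_c(x,\cdot)$, and commutes with Bochner integration. Hence $\int_\cX k_c(x,\cdot)\,\bP_X(dx)=V(0)=0$, and strong measurability of $x\mapsto k_c(x,\cdot)$ comes for free.
\end{enumerate}
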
 
\begin{proof} 
We compute directly that \begin{equation}\label{eq:expand} \begin{split}
\int_{\cX} \Phi_c(x) &\bP_X(dx) = \int_{\cX} k(x,\cdot) \bP_X(dx) - \int_{\cX}\int_{\cX} k(x,s) \bP_X(ds)\bP_X(dx)\\
& - \int_{\cX}\int_{\cX} k(\cdot,s) \bP_X(ds)\bP_X(dx) + \int_{\cX}\int_{\cX}\int_{\cX} k(s,t) \bP_X(ds)\bP_X(dt)\bP_X(dx)
\end{split}\end{equation} Notice that the integrands of the third and fourth terms in the right-hand side of~\eqref{eq:expand} are independent of $x$. We therefore simplify~\eqref{eq:expand} to read as follows: \begin{equation}\label{eq:expand2}\begin{split}
\int_{\cX} \Phi_c(x)\bP_X(dx) = \int_{\cX} &k(x,\cdot) \bP_X(dx) - \int_{\cX}\int_{\cX} k(x,s) \bP_X(ds)\bP_X(dx)\\
& - \int_{\cX} k(\cdot,s) \bP_X(ds) + \int_{\cX}\int_{\cX} k(s,t) \bP_X(ds)\bP_X(dt).
\end{split}\end{equation} The second and fourth terms in the right-hand side of~\eqref{eq:expand2} are equal due to the symmetry of the kernel function $k$. The first and third terms in this equation are equal by the same property. It follows that \begin{equation}\label{eq:expand3} \int_{\cX} \Phi_c(x)\bP_X(dx) = 0,\end{equation} which is the desired conclusion.
\end{proof}

With this result in mind, we can use the reproduction property of $k_c$ to show that its associated RKHS $\cH^0$ consists only of mean-zero functions with respect to $\bP_X$. Indeed, for every $f\in \cH^0$, we have 
\begin{equation}
\int_\cX f(x) \bP_X(dx) = \int_\cX \langle f, \Phi_c(x)\rangle_{\cH^0}\ \bP_X(dx) = \langle f, \int_\cX \Phi_c(x) \bP_X(dx)\rangle_{\cH^0} = \langle f, 0\rangle_{\cH^0} = 0,
\end{equation} 
where the penultimate equality follows due to Lemma~\ref{lem:KMEcenter}. This
characterization of functions in $\cH^0$ allows us to describe the space $\cH^0$
as the image of $\cH_k$, the RKHS associated with the original kernel $k$, under
a linear operator.  Specifically, $\cH^0 = P(\cH_k)$, where
\begin{equation}\label{eq:proj}
	Pf \defeq f - \int_\cX f(x) \ \bP_X(dx), \quad f \in \cH_k.
\end{equation} 
It is straightforward to note that $P^2=P$, which yields the interpretation that
the centering of $k$ with respect to $\bP_X$ is a projection of $\cH_k$ onto the
subspace of mean-zero functions with respect to $\bP_X$. We are particularly
interested in the setting where $P$ is injective. Note that
\begin{equation}\label{eq:kerP}
\text{ker}P = \{f\in \cH_k \ \vert \ f = \int_{\cX} f(x) \bP_X(dx)\}.
\end{equation} 
It follows that $\text{ker}P = \text{span}\{\1\}$ if $\1\in \cH_k$ and  $\text{ker}P =
\{0\}$ otherwise. In short, the centering of $k$ with respect to $\bP_X$ is only bijective when $\1\notin \cH_k$. 
As such, it is advantageous to consider the centering of Gaussian kernels since
their associated RKHSs do not contain $\1$.

Finally, we describe how to construct an RKHS $\cH = \text{span}\{\1\} \oplus
\cH^0$ as in Lemma~\ref{eq:RKHSdecomp}. While the ``base kernel" $k$ may
be any kernel whose associated RKHS does not contain the constant
function $\1$, it is advantageous to use the Gaussian kernels for convenience.  

\begin{definition}\label{def:aug}
	Let $(\cX, \cB(\cX), \bP_X)$ be a probability space equipped with kernel $k$, and assume that $\bP_X\in \mathcal{M}_k(\cX)$. Denote by $\cH_k$ the RKHS associated with $k$, and assume that $\1\notin \cH_k$. Then the {\em augmented kernel} $k^* = k^*(k, \bP_X)$ defined by \begin{equation}
	k^*(x,x') \defeq k_c(x,x') + 1, \quad x,x'\in \cX,
	\end{equation} is a kernel whose associated RKHS $\cH$ admits the orthogonal decomposition $\cH = \text{span}\{\1\} \oplus
\cH^0$, where $\cH^0$ consists of mean-zero functions with respect to $\bP_X$.
\end{definition}

\paragraph{Remark} Other authors refer to these kernels as (one-dimensional) {\em ANOVA kernels} due to their role in defining a functional ANOVA decomposition under the assumption of independent inputs \cite{daVeiga21, Durrande11}. Our choice in nomenclature
is made to highlight the lack of assumptions we impose on the dependence of
inputs. The following result proves that the product of augmented kernels with base Gaussian kernels is characteristic.

\begin{theorem}\label{thm:prodchar}
	Let $\{(\cX_i, \cB(\cX_i), \bP_i)\}_{i=1}^p$ be a collection of 
    probability spaces, where the $\cX_i$ are all separable Hilbert spaces. Let $k_i: \cX_i\times \cX_i\to \bR$, $i \in \{1, \ldots, p\}$, be a 
    collection of Gaussian kernels with respective bandwidth parameters $\{\sigma_i\}_{i=1}^p$. Denote by $k^*_i = k^*_i(k_i, \bP_i)$ the augmented Gaussian kernel on each space. The product kernel $k^* \defeq \prod_{i=1}^p k^*_i$ is characteristic on $\cX\defeq \bigtimes_{i=1}^p \cX_i$. 
\end{theorem}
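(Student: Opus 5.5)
The plan is to reduce the claim to Lemma~\ref{lem:productchar}: if two probability measures have the same mean embedding under $k^*$, then they also have the same embedding under the Gaussian product kernel $k \defeq \prod_{i=1}^p k_i$, and that kernel is already known to be characteristic. All kernels involved are bounded. The Gaussian $k_i$ are bounded by $1$, so their means $m_i(x)\defeq\int k_i(x,s)\bP_i(ds)$ are bounded, so $k_{c,i}$ and $k^*_i$ are bounded, and hence so is $k^*$. Consequently $\mathcal{M}_{k^*}(\cX)=\mathcal{M}(\cX)$, and every function in $\cH_{k^*}$ is bounded and integrable against any probability measure. Fix $\mathbb{Q}_1,\mathbb{Q}_2\in\mathcal{M}(\cX)$ with $\eta_{k^*}(\mathbb{Q}_1)=\eta_{k^*}(\mathbb{Q}_2)$. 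By the reproducing property and linearity of the Bochner integral this gives $\int f\,d\mathbb{Q}_1=\int f\,d\mathbb{Q}_2$ for every $f\in\cH_{k^*}$.

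The first step is a one-dimensional inclusion: $\cH_{k_i}\subseteq \cH_{k^*_i}$ as sets of functions. The RKHS of a sum of kernels is the (algebraic) sum of the RKHSs, so $\cH_{k^*_i}=\cH^0_i+\mathrm{span}\{\1\}$. Here $\cH^0_i=P_i(\cH_{k_i})$ is the RKHS of the centering $k_{c,i}$, as described around~\eqref{eq:proj}, and $\mathrm{span}\{\1\}$ is the RKHS of the constant kernel $1$. Given $f\in\cH_{k_i}$, we write $f=P_if+\big(\int f\,d\bP_i\big)\1$. The first term lies in $\cH^0_i$ and the second in $\mathrm{span}\{\1\}$, so $f\in\cH_{k^*_i}$.

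The second step passes to products. By Lemma~\ref{lem:productkernel}, $\cH_{k^*}=\widehat{\otimes}_{i}\cH_{k^*_i}$. Under this identification an elementary tensor $f_1\otimes\cdots\otimes f_p$ is the function $x\mapsto\prod_i f_i(x_i)$; this holds for the feature vectors $k^*_i(x_i,\cdot)$ by construction and extends by linearity and continuity of point evaluation. Now take $y\in\cX$ and set $f_i=k_i(y_i,\cdot)\in\cH_{k_i}\subseteq\cH_{k^*_i}$. Then the function $x\mapsto k(y,x)=\prod_i k_i(y_i,x_i)$ belongs to $\cH_{k^*}$. This avoids any question about completions, because we only need finitely many elementary tensors. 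Applying the equality of integrals from the first paragraph gives $\int k(y,x)\,\mathbb{Q}_1(dx)=\int k(y,x)\,\mathbb{Q}_2(dx)$ for all $y$. In other words, $\eta_k(\mathbb{Q}_1)$ and $\eta_k(\mathbb{Q}_2)$ agree pointwise as functions, hence as elements of $\cH_k$. Lemma~\ref{lem:productchar} then yields $\mathbb{Q}_1=\mathbb{Q}_2$.

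The main obstacle is the first step. It requires identifying the RKHS of $k_{c,i}$ exactly with $P_i(\cH_{k_i})$, and invoking the sum-of-kernels rule of Aronszajn. The paper asserts the former informally, so I would verify it through the feature map $\Phi_c=\Phi_k-\eta_k(\bP_i)$ from the proof of Lemma~\ref{lem:kcisker}. The RKHS of $k_{c,i}$ consists of the functions $x\mapsto\langle g,\Phi_c(x)\rangle=g(x)-\int g\,d\bP_i$ with $g\in\cH_{k_i}$, which is precisely $P_i(\cH_{k_i})$. Once this is in place, the remaining steps are routine.
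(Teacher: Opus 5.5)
Your proof is correct and follows essentially the same route as the paper: show that the Gaussian feature functions $k_i(t_i,\cdot)$ lie in the RKHS of $k_i^*$, test $\eta_{k^*}(\mathbb{Q}_1)=\eta_{k^*}(\mathbb{Q}_2)$ against the elementary tensor $\otimes_i k_i(t_i,\cdot)$ to obtain $\eta_k(\mathbb{Q}_1)=\eta_k(\mathbb{Q}_2)$, and conclude with Lemma~\ref{lem:productchar}. The paper decomposes $k_i(t_i,\cdot)=k_{i,c}(t_i,\cdot)+\mu_i(t_i)\1+(\mu_i-b_i)$ and evaluates the inner product using orthogonality; you instead prove the general inclusion $\cH_{k_i}\subseteq\cH_{k_i^*}$ via $f=P_if+(\int f\,d\bP_i)\1$ and explicitly justify $\cH^0_i=P_i(\cH_{k_i})$ through $\Phi_c$, which makes explicit a membership fact the paper uses without proof.
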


\begin{proof}
	Recall that $k_i$ is bounded for each $i$, so $k_i^*$ is bounded for each
	$i$. It follows that $\mathcal{M}_{k_i}(\cX_i) = \mathcal{M}(\cX_i)  
    = \mathcal{M}_{k_i^*}(\cX_i)$.
    Moreover, letting $k\defeq \prod_{i=1}^p k_i$, 
    allows us to write that 
	$\mathcal{M}_{k}(\cX) = \mathcal{M}(\cX) = \mathcal{M}_{k^*}(\cX)$. Let $\mathbb{Q}_1, \mathbb{Q}_2\in \mathcal{M}(\cX)$, and
	assume that $\eta_{k^*}(\mathbb{Q}_1) = \eta_{k^*}(\mathbb{Q}_2)$. 
	Now, we will show that $\mathbb{Q}_1=\mathbb{Q}_2$. 
    
    Denote by $\cH =
	\widehat{\otimes}_{i=1}^p \cH_i$ the RKHS associated with $k^*$. From
	$\eta_{k^*}(\mathbb{Q}_1) = \eta_{k^*}(\mathbb{Q}_2)$ we have 
	\begin{equation}\label{eq:step1} \int_\cX \langle f,
	k^*(x,\cdot)\rangle_{\cH} \mathbb{Q}_1(dx) = \int_\cX \langle f,
	k^*(x,\cdot)\rangle_{\cH} \mathbb{Q}_2(dx), \quad \text{for all } f\in \cH.  
    \end{equation}
	Using the definition of the inner product on $\cH$,~\eqref{eq:step1} becomes
	\begin{equation}\label{eq:step2} 
        \int_\cX \prod_{i=1}^p \langle f_i,
	    k_i^*(x_i,\cdot)\rangle_{\cH_i} \mathbb{Q}_1(dx) = \int_\cX \prod_{i=1}^p
	    \langle f_i, k_i^*(x_i,\cdot)\rangle_{\cH_i} \mathbb{Q}_2(dx), \quad 
        \text{for all } f_i\in\cH_i.  
    \end{equation} 
    Let $t_i \in \cX_i$, $i \in \{1, \dots, p\}$, 
    be fixed but arbitrary, and set $f_i\defeq k_i(t_i, \cdot)\in \cH_i$ for each $i$.
	Then, 
    \begin{equation}\label{eq:fi} f_i = k_{i,c}(t_i,\cdot)
	+\mu_i(t_i)\cdot\1 + \mu_i(\cdot) - b_i, 
    \end{equation} 
    where 
	$\mu_i(\cdot) \defeq \int_{\cX_i} k_i(\cdot,s) \bP_i(ds)$, $b_i\defeq
	\int_{\cX_i} \int_{\cX_i} k_i(s,s') \bP_i(ds)\bP_i(ds')$, 
	and $k_{i,c}$ denotes the centering of $k_i$ with respect to
	$\bP_i$. It is straightforward to show that $\mu_i(\cdot)-b_i$ is mean-zero
	with respect to $\bP_i$. Then, using 
    the orthogonality of mean-zero (with respect to $\bP_i$) 
    functions and $\text{span}\{ \1 \}$ within $\cH_i$   
    (cf.~Lemma~\ref{lem:RKHSdecomp}), we have 
    \begin{equation}\label{eq:key}
    \begin{aligned} \langle f_i,k_i^*(x_i,\cdot)\rangle_{\cH_i} &= \langle k_{i,c}(t_i,\cdot)
	+\mu_i(t_i)\cdot \1 + \mu_i(\cdot) - b_i\cdot \1, k_{i,c}(x_i,\cdot) +
	\1(x_i)\rangle_{\cH_i}\\ 
        &= \langle k_{i,c}(t_i,\cdot) + \mu_i(\cdot) -
	b_i\cdot \1, k_{i,c}(x_i,\cdot)\rangle_{\cH_i} + \mu_i(t_i)\langle \1,
	\1(x_i)\rangle_{\cH_i}\\ 
        &= k_{i,c}(t_i,x_i) + \mu_i(x_i) - b_i\cdot \1(x_i)
	    + \mu_i(t_i)\cdot\1(x_i)\\
    &= k_i(t_i,x_i).
    \end{aligned} 
    \end{equation}
    Note that 
    the penultimate equality follows from the reproducing property of the
	kernels, and the final equality follows from~\eqref{eq:fi}.
	Substituting~\eqref{eq:key} into~\eqref{eq:step2} yields that
	\begin{equation}\label{eq:step3} \int_\cX \prod_{i=1}^p k_i(t_i,x_i)
	\mathbb{Q}_1(dx) = \int_\cX \prod_{i=1}^p k_i(t_i,x_i) \mathbb{Q}_2(dx).
	\end{equation} Since the $t_i$'s are arbitrary, we conclude
	from~\eqref{eq:step3} that 
    \begin{equation}\label{eq:step4}
	 \eta_k(\mathbb{Q}_1) 
        = \int_\cX k(x, \cdot) \mathbb{Q}_1(dx)
	= \int_\cX k(x,\cdot) \mathbb{Q}_2(dx) =
	\eta_k(\mathbb{Q}_2).  
    \end{equation} 
    Finally, invoking Lemma~\ref{lem:productchar}, 
    which states that $k$ is characteristic on $\cX$, we have 
	$\mathbb{Q}_1 = \mathbb{Q}_2$. 
	\end{proof}

\paragraph{Remark} While we focus on augmented kernels with base Gaussian kernels in
Theorem~\ref{thm:prodchar}, this assumption can be relaxed. Indeed, any bounded
kernels $k_i$ whose associated RKHSs do not contain constant functions can be
employed as base kernels, so long as their product kernel $k$ is characteristic
on $\cX$. This condition is difficult to satisfy, as the product of
characteristic kernels is not always characteristic on the corresponding product
space~\cite{Szabo18}.  As such, we focus our attention on Gaussian base kernels.

\subsection{Ensuring monotonicity}
With the developments in the previous subsection in place, we are now ready to
state and prove Theorem~\ref{thm:mono}, which is the core result of this
article. This result shows that, by using products of augmented kernels whose base kernels are Gaussian, we can ensure the monotonicity of the HSIC under marginalization. 

\begin{theorem}\label{thm:mono}
Let $\{(\cX_i, \cB(\cX_i), \bP_i)\}_{i=1}^p$ be a collection of probability spaces, where the $\cX_i$ are all separable Hilbert spaces. Let $k_i^*: \cX_i\times \cX_i\to \bR$, $i\in\{1,\dots, p\}$ be a collection of augmented kernels whose base kernels are Gaussian with respective bandwidths $\{\sigma_i\}_{i=1}^p$. Similarly, let $(\cY, \cB(\cY), \bP_Y)$ be a probability space, where $\cY$ is a separable Hilbert space equipped with $k_Y^*$, an augmented kernel whose base kernel is Gaussian with bandwidth $\sigma_Y$. For any index subset $A\subseteq \{1,\dots, p\}$, endow the product space $\cX_A$ with the product kernel $k^*_A \defeq \prod_{i\in A} k_i^*$. Then for every $A\subseteq B\subseteq \{1,\dots, p\}$, we have 
\begin{equation}\HSIC(X_A,Y) \le \HSIC(X_B,Y).
\end{equation}
\end{theorem}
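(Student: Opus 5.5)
It suffices to treat $B = A\cup\{j\}$ with $j\notin A$ and then induct on $|B\setminus A|$. I will express $\HSIC(X_B,Y)$ as the squared Hilbert--Schmidt norm of $C_{X_BY}$. By Lemma~\ref{lem:productkernel}, its domain of values is the RKHS $\cH_B = \cH_A\widehat{\otimes}\cH_j$. By Definition~\ref{def:aug} (with Lemma~\ref{lem:RKHSdecomp}), $\cH_j = \text{span}\{\1\}\oplus\cH_j^0$ orthogonally. A short check is needed here: in $\cH_j$ the function $\1$ has norm one, since $k_j^*(x,x') = \langle k_{j,c}(x,\cdot) + \1, k_{j,c}(x',\cdot)+\1\rangle$ with the two summands orthogonal. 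Consequently
\[
\cH_B = \bigl(\cH_A\otimes \1\bigr)\oplus\bigl(\cH_A\widehat{\otimes}\cH_j^0\bigr),
\]
and the first summand is isometric to $\cH_A$ via $f\mapsto f\otimes\1$. Let $\Pi:\cH_B\to\cH_B$ denote the orthogonal projection onto $\cH_A\otimes\1$, and let $J:\cH_A\to\cH_B$, $Jf = f\otimes \1$, be the corresponding isometric embedding. Then $J^*\Pi = J^*$ and $J^*(f\otimes g) = \langle g,\1\rangle_{\cH_j} f$.

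The key computation is that $J^*\Phi_B(x_A,x_j) = \Phi_A(x_A)$ for every $(x_A,x_j)$. Indeed, $\Phi_B(x_A,x_j) = \Phi_A(x_A)\otimes k_j^*(x_j,\cdot)$, and
\[
\langle k_j^*(x_j,\cdot),\1\rangle_{\cH_j} = \1(x_j) = 1
\]
by the reproducing property. Bochner integrals commute with the bounded operator $J^*$, so applying $J^*$ to~\eqref{eq:cross_covariance_operator} gives
\[
J^* C_{X_BY} = \int \Phi_A(x_A)\otimes\Phi_\cY(y)\,\bP_{X_B,Y}(dx,dy) - M_{\cX_A}\otimes M_\cY = C_{X_AY}.
\]
Here the integrand depends only on $(x_A,y)$, so integrating against $\bP_{X_B,Y}$ reduces to integrating against the marginal $\bP_{X_A,Y}$. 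The same marginalization gives $J^*M_{\cX_B} = M_{\cX_A}$. I am using that for operators $f\otimes g$, $J^*(f\otimes g) = (J^*f)\otimes g$.

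The conclusion then follows from a norm inequality:
\[
\HSIC(X_A,Y) = \|J^*C_{X_BY}\|_{HS}^2 \le \|J^*\|^2\,\|C_{X_BY}\|_{HS}^2 \le \HSIC(X_B,Y),
\]
since $J^*$ is a partial isometry with $\|J^*\|\le 1$. Equivalently, $\|C_{X_BY}\|_{HS}^2 = \|\Pi C_{X_BY}\|_{HS}^2 + \|(I-\Pi)C_{X_BY}\|_{HS}^2$, and the first term equals $\HSIC(X_A,Y)$; the second term is then the explicit increment. Note that the augmented kernel on $\cY$ and the Gaussian base kernels play no role in monotonicity itself; they matter only for characteristic properties via Theorem~\ref{thm:prodchar}. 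The integrability hypotheses are automatic because all kernels are bounded.

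The main obstacle I anticipate is justifying the identity $\langle k_j^*(x_j,\cdot),\1\rangle_{\cH_j}=1$ together with $\|\1\|_{\cH_j}=1$. This requires that $k_j^*$ genuinely decomposes as the sum of the reproducing kernels of two orthogonal subspaces, $\text{span}\{\1\}$ with kernel $1$ and $\cH_j^0$ with kernel $k_{j,c}$. I would prove this directly using the standard fact that the RKHS of a sum of kernels is the sum of the RKHSs, and that this sum is orthogonal and direct whenever the two spaces intersect trivially. Trivial intersection holds here because $\1\notin\cH_j^0$: every function in $\cH_j^0$ has mean zero, while $\1$ has mean one. Once this is in place, the rest is bookkeeping with tensor products of Hilbert spaces and Bochner integrals.
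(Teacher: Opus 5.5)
Your proof is correct and is essentially the paper's argument. The paper likewise embeds $\cH_A$ into $\cH_A\widehat\otimes\cH_{A^c}$ via $f_A\mapsto f_A\otimes\1_{A^c}$, though it takes all of $A^c$ at once rather than inducting, and it shows $C_{X_AY}=i^*C_{XY}$ weakly via the reproducing property and Fubini rather than through your pointwise identity $J^*\Phi_B=\Phi_A$; it then concludes from $\|i^*\|=1$ and the Hilbert--Schmidt norm inequality, exactly as you do. Your sum-of-kernels check that $\|\1\|_{\cH_j}=1$ is a useful addition, since the paper simply asserts that the embedding has norm one, and that fact is exactly what this claim needs.
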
 
\begin{proof}
Without loss of generality, we assume that $B=\{1,\dots,p\}$. Also, since 
the conclusion of the theorem holds trivially when $A = B$, we assume $A$ is a proper 
subset of $B$. We consider $\cH = \cH_A
\widehat\otimes \cH_{A^c}$ and define $\mathbf{1}_{A^c} \in \cH_{A^c}$ so that
$\mathbf{1}_{A^c} = \mathbf{1}\otimes \dots \otimes \mathbf{1}$ is the tensor
product of $p-|A|$ copies of $\mathbf{1}$.
Note that since the RKHSs associated with augmented kernels contain the constant function 
$\1$ (cf.~Lemma~\ref{lem:RKHSdecomp}), we know that $\mathbf{1}_{A^c}$ is well-defined. 
Define $i: \cH_A\to \cH$ so that
$i(f_A) = f_A\otimes \mathbf{1}_{A^c}$ for every $f_A\in \cH_A$. It is
straightforward to show that $i$ is a bounded linear operator with operator norm
equal to 1. The adjoint $i^*$ of $i$ is therefore a bounded linear operator with
the same norm. 

    
We now claim that $C_{X_AY} = i^*C_{XY}$. It suffices to show that for every
$f_A\in \cH_A$ and $g\in \cG$,%
\begin{equation}\label{eq:covequality}
        \langle f_A, C_{X_A Y}(g)\rangle_{\cH_A} = \langle f_A, i^*C_{XY}(g)\rangle_{\cH_A}.
\end{equation} 
Recalling~\eqref{eq:cross_covariance_operator}
and 
applying the kernel trick to the left-hand side of \cref{eq:covequality} yields 
\begin{equation}\label{eq:lhs} \begin{split}
    \langle f_A, C_{X_A Y}(g)\rangle_{\cH_A} &= \int_{\cX_A\times \cY} f_A(x_A)g(y)\bP_{X_A\times Y}(dx_A,dy) \\
        & \quad \quad - \int_{\cX_A} f_A(x_A)\bP_{X_A}(dx_A)\int_{\cY} g(y)\bP_{Y}(dy). \end{split}
\end{equation} 
Considering the right-hand side of $\cref{eq:covequality}$, we note 
\begin{equation}\label{eq:rhsip}\langle f_A,i^*C_{XY}(g)\rangle_{\cH_A}=\langle i(f_A),C_{XY}(g)\rangle_{\cH}.
\end{equation} 
After applying the kernel trick and Fubini's Theorem to \cref{eq:rhsip}, we obtain 
     \begin{equation}\begin{split}
         \langle i(f_A),C_{XY}(g)\rangle_{\cH}
         &= \int_{\cX_A\times \cY} f_A(x_A)g(y)\bP_{X_A\times Y}(dx_A,dy) \\
         &\quad \quad - \int_{\cX_A} f_A(x_A)\bP_{X_A}(dx_A)\int_{\cY} g(y)\bP_{Y}(dy), \end{split}
     \end{equation} which is precisely \cref{eq:lhs}. It follows that $C_{X_AY}=i^*C_{XY}$. Moreover, we have that \begin{equation}\label{eq:normineq} \|C_{X_A Y}\|_{HS} = \|i^*C_{XY}\|_{HS} \le \|C_{XY}\|_{HS}\end{equation} due to the submultiplicativity of the Hilbert--Schmidt norm under compositions with bounded linear operators. Squaring both sides of \cref{eq:normineq} yields that $\HSIC(X_A,Y)\le \HSIC(X,Y)$.
\end{proof}
\section{The first-order and total HSIC sensitivity indices}\label{sec:totalHSICSec}
In this section, we use the setting of Theorem~\ref{thm:mono} to define
the total HSIC sensitivity index. We also discuss efficient methods for
computing these indices. 

\subsection{Defining the sensitivity indices}\label{sec:totalHSIC}
Let $\{(\cX_i, \cB(\cX_i), \bP_i)\}_{i=1}^p$ be a collection of probability spaces, where the $\cX_i$ are all separable Hilbert spaces. Let $k_i^*: \cX_i\times \cX_i\to \bR$, $i\in\{1,\dots,p\}$ be a collection of augmented kernels whose base kernels are Gaussian with respective bandwidths $\{\sigma_i\}_{i=1}^p$. Similarly, let $(\cY, \cB(\cY), \bP_Y)$ be a probability space, where $\cY$ is a separable Hilbert space equipped with $k_Y^*$, an augmented kernel whose base kernel is Gaussian with bandwidth $\sigma_Y$. For any index subset $A\subseteq \{1,\dots, p\}$, endow the product space $\cX_A$ with the product kernel $k^*_A \defeq \prod_{i\in A} k_i^*$. We then define the \emph{first-order} and {\em total HSIC sensitivity indices} as 
\begin{equation}
\mathbb{F}_A(Y):= \frac{\HSIC(X_{A},Y)}{\HSIC(X,Y)}, \quad \mathbb{T}_A(Y):= 1 - \frac{\HSIC(X_{\sim A},Y)}{\HSIC(X,Y)},
\end{equation} 
where $X_{\sim A} = (X_i)_{i\notin A}$. For ease of notation, we write $\mathbb{F}_i(Y)$ and $\mathbb{T}_i(Y)$ whenever $A=\{i\}$. 

This definition mirrors the structure of the first-order and total-effect Sobol’
indices, but quantifies much more of the distributional dependence between $X_A$ and $Y$ than variance alone. We note that the collection of the total HSIC sensitivity indices does not form an
additive decomposition: the sum of $\mathbb{T}_A(Y)$ over all index subsets $A$ need not equal 1. This is not a defect, but a reflection of the fact that $\mathbb{T}_A(Y)$ captures all the effects of $X_A$ upon $Y$ in a manner analogous to total-effect Sobol’
indices. Since our setup does not require independence of the inputs, the first-order HSIC indices capture the first-order and correlation-based influence of a parameter subset upon the model output. Analogously, the total HSIC indices capture all first-order, joint, and correlation-based effects. Like the Sobol’ indices, the first-order and total HSIC sensitivity indices measure a ``share” of influence, but now in broader generality. 

By definition, we interpret $\mathbb{F}_A(Y)$ as the share of $\HSIC(X,Y)$ that can be attributed to the subset $X_A$. In addition to this explanation, we may also interpret the index $\mathbb{T}_A(Y)$ as the relative error incurred by approximating $\HSIC(X,Y)$ when $X_A$ is excluded from the input set. In contrast to the raw HSIC and distance correlation indices, the first-order and total HSIC sensitivity indices provide a meaningful ranking of parameter influence due to monotonicity under marginalization. Indeed, considering the effects of additional parameters upon the model output will only increase the corresponding sensitivity index. The following result demonstrates that the first-order and total HSIC sensitivity indices are bounded between 0 and 1, a key property not exhibited by the Sobol' indices under the assumption of arbitrarily-correlated inputs \cite{Hart18}.

\begin{lemma}\label{lem:bound}
	For any $A\subseteq \{1,\dots, p\}$, we have $0 \leq \mathbb{F}_A(Y)\leq 1$ and $0 \leq \mathbb{T}_A(Y)\leq 1$.
\end{lemma}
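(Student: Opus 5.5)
The plan is to read the bounds off Theorem~\ref{thm:mono} together with the nonnegativity of the HSIC. Throughout we assume $\HSIC(X,Y)>0$. This is implicit in the definition of $\mathbb{F}_A(Y)$ and $\mathbb{T}_A(Y)$, which divide by it. By Theorem~\ref{thm:prodchar} and Theorem~\ref{thm:HSICind}, the assumption is equivalent to $X$ and $Y$ not being independent.

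The first step is nonnegativity. For every index subset $C\subseteq\{1,\dots,p\}$, $\HSIC(X_C,Y)=\|C_{X_CY}\|_{HS}^2\ge 0$ by Definition~\ref{def:HSIC}, being a squared norm. The second step applies Theorem~\ref{thm:mono} with $B=\{1,\dots,p\}$. This gives $\HSIC(X_C,Y)\le\HSIC(X,Y)$ for every $C$, and since the denominator is positive,
\begin{equation*}
0\le \frac{\HSIC(X_C,Y)}{\HSIC(X,Y)}\le 1 .
\end{equation*}

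The final step specializes $C$. Taking $C=A$ yields $0\le\mathbb{F}_A(Y)\le 1$ directly. Taking $C=\sim A=\{1,\dots,p\}\setminus A$ and applying the map $t\mapsto 1-t$, which sends $[0,1]$ onto itself, yields $0\le\mathbb{T}_A(Y)\le 1$.

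There are two degenerate cases to check. If $A=\emptyset$, then $X_\emptyset$ is constant. If $A=\{1,\dots,p\}$, then $X_{\sim A}$ is empty. In either case the HSIC of the empty input should be read as $0$, because the cross-covariance with a constant feature vanishes. Theorem~\ref{thm:mono} still covers these cases, or they can be verified directly, giving $\mathbb{F}_\emptyset=0$ and $\mathbb{T}_{\{1,\dots,p\}}=1$.

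The only point needing care is this convention for the empty index set, together with the standing assumption $\HSIC(X,Y)>0$. All of the substantive work is already contained in Theorem~\ref{thm:mono}.
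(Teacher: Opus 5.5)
Your proposal is correct and follows the same route as the paper, whose entire proof is the one-line remark that the bounds follow from the monotonicity of the HSIC under marginalization (Theorem~\ref{thm:mono}). You also make explicit what the paper leaves implicit: $\HSIC$ is nonnegative as a squared Hilbert--Schmidt norm, $\HSIC(X,Y)>0$ is a standing assumption, and $\HSIC(X_\emptyset,Y)=0$ by convention for the empty index set.
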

\begin{proof}
Immediately follows from the monotonicity of HSIC under marginalization. 
\end{proof}

In summary, the first-order and total HSIC sensitivity indices unify the strengths of existing
methods: they preserve the generality and computational efficiency of the raw HSIC, while
recovering the interpretability that make Sobol’ indices so
useful in practice. As such, the indices $\mathbb{F}_A(Y)$ and $\mathbb{T}_A(Y)$ provide compelling candidates for robust
sensitivity analysis across a wide variety of models. In the following section,
we describe how to estimate these sensitivity indices from data, and then present
numerical experiments comparing distance correlation, first-order HSIC, total HSIC, and total-effect Sobol’ indices in practice.

\subsection{Computing the sensitivity indices}\label{sec:theory}
We now turn to the numerical estimation of the first-order and total HSIC sensitivity indices defined in the previous section. 
Throughout the remainder of the article, we employ the augmented kernels prescribed in Section~\ref{sec:theory}, which were chosen to ensure that the indices satisfy monotonicity under marginalization and are characteristic on every separable Hilbert space. In particular, we choose Gaussian kernels as the base from which we obtain the augmented kernels. For each index subset $A\subset\{1,\dots,p\}$, we select the bandwidth parameter $\sigma_{X_A}$ for the Gaussian kernel $k_{\sigma_{X_A}}$ according to the sample-based {\em median heuristic} given by
\begin{equation}
  \sigma_{X_A} = \mathrm{median}\{\|X_A^{(i)} - X_A^{(j)}\|_{\cX_A} \ \vert \ {i<j}\},
  \label{eq:median_heuristic}
\end{equation}
computed over all pairwise distances among the samples. This heuristic provides a convenient baseline that adapts to the spread of the data and generally yields stable estimates near its chosen scale. As we demonstrate in the discussions to follow, this choice is sufficient for obtaining consistent rankings of parameter importance in the test problems considered. Having addressed the issue of kernel selection, we turn our attention now to the empirical estimator for the HSIC.

Given \( n \) independent realizations \((X^{(j)}, Y^{(j)})_{j=1}^n\) from the joint law of the model inputs and output, the well-known empirical estimator for the HSIC presented in~\cite{Gretton05a} takes the form\begin{equation}\label{eq:est}
  \widehat{\HSIC}(X, Y)
  = \frac{1}{n^2} \operatorname{tr}(\mathbf{KHLH}),
\end{equation}
where \( \mathbf{K, L} \in \mathbb{R}^{n \times n} \) are Gram matrices whose elements are respectively given by
$$K_{ij} := k(X^{(i)}, X^{(j)})\quad  \text{and}\quad L_{ij} := \ell(Y^{(i)}, Y^{(j)}). $$
The matrix \( \mathbf{H} \) centers the data and takes the form $$\mathbf{H} := \mathbf{I}_n - \mathbf{zz}^T,$$ where $\mathbf{z} = \frac{1}{\sqrt{n}}\1_n$. An estimator for $\HSIC(X_A,Y)$ is defined analogously, using $k_A$ in place of $k$ for the entries of $\mathbf{K}$. This matrix formulation highlights the computational structure of the estimator: once \( \mathbf{K} \) and \( \mathbf{L} \) are formed, \( \widehat{\mathrm{HSIC}} \) reduces to a single trace operation. 

Direct computation of the estimate in \cref{eq:est} involves an $\mathcal{O}(n^3)$ matrix product, which becomes computationally prohibitive for large sample sizes $n$. To achieve an $\mathcal{O}(n^2)$ complexity, the trace can be re-expressed into a simplified component-wise summation. This procedure avoids multiplication with the centering matrix $\mathbf{H}$, resulting in a faster calculation suitable for large-scale applications. The following result underpins our improved calculation.

\begin{lemma}\label{lem:trace}
Let $\mathbf{z} = \frac{1}{\sqrt{n}}\1_n$. We obtain the following identity: $$\operatorname{tr}(\mathbf{KHLH}) = \operatorname{tr}(\mathbf{KL}) - 2\langle \mathbf{Kz}, \mathbf{Lz}\rangle_2 + \langle \mathbf{z}, \mathbf{Kz}\rangle_2\langle \mathbf{z}, \mathbf{Lz}\rangle_2,$$ where $\langle \cdot, \cdot\rangle_2$ denotes the Euclidean inner product in $\bR^n$.
\end{lemma}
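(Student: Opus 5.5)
We expand the product $\mathbf{KHLH}$ directly using $\mathbf{H} = \mathbf{I}_n - \mathbf{zz}^T$. Linearity and the cyclic property of the trace then reduce every resulting term to an inner product in $\bR^n$. Two facts do all the work: $\|\mathbf{z}\|_2^2 = \mathbf{z}^T\mathbf{z} = 1$, and $\mathbf{K}$ and $\mathbf{L}$ are symmetric, since they are Gram matrices of symmetric kernels.

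First we write
\[
\mathbf{KHLH} = (\mathbf{K} - \mathbf{Kzz}^T)(\mathbf{L} - \mathbf{Lzz}^T) = \mathbf{KL} - \mathbf{KLzz}^T - \mathbf{Kzz}^T\mathbf{L} + \mathbf{Kzz}^T\mathbf{Lzz}^T
\]
and take the trace of each of the four terms separately. The first term is $\operatorname{tr}(\mathbf{KL})$. For the second, cyclicity gives $\operatorname{tr}(\mathbf{KLzz}^T) = \mathbf{z}^T\mathbf{KLz}$. Since $\mathbf{K}^T = \mathbf{K}$, this equals $(\mathbf{Kz})^T(\mathbf{Lz}) = \langle \mathbf{Kz}, \mathbf{Lz}\rangle_2$. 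Likewise, $\operatorname{tr}(\mathbf{Kzz}^T\mathbf{L}) = \mathbf{z}^T\mathbf{LKz}$, which equals $\langle \mathbf{Lz}, \mathbf{Kz}\rangle_2$ by symmetry of $\mathbf{L}$. Together the two middle terms therefore contribute $-2\langle \mathbf{Kz}, \mathbf{Lz}\rangle_2$.

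For the last term, we note that $\mathbf{z}^T\mathbf{Lz}$ is a scalar and pull it out:
\[
\operatorname{tr}(\mathbf{Kz}(\mathbf{z}^T\mathbf{Lz})\mathbf{z}^T) = (\mathbf{z}^T\mathbf{Lz})\operatorname{tr}(\mathbf{Kzz}^T) = (\mathbf{z}^T\mathbf{Lz})(\mathbf{z}^T\mathbf{Kz}) = \langle \mathbf{z}, \mathbf{Kz}\rangle_2\langle \mathbf{z}, \mathbf{Lz}\rangle_2.
\]
Summing the four contributions gives the stated identity.

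No step is a real obstacle; the only care needed is bookkeeping. One point is to invoke symmetry of $\mathbf{K}$ and $\mathbf{L}$ so that the two cross terms coincide. The other is that the naive expansion would produce factors of $\mathbf{z}^T\mathbf{z}$, which we replace by $1$. An equivalent route, useful as a check, is to write $\operatorname{tr}(\mathbf{KHLH}) = \operatorname{tr}(\mathbf{HKH}\,\mathbf{HLH})$ using $\mathbf{H}^2 = \mathbf{H}$. However, the direct four-term expansion above is shorter, and we would present that.
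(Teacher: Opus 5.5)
Your proof is correct and is essentially the routine expansion-plus-cyclicity calculation the paper alludes to, with the symmetry of $\mathbf{K}$ and $\mathbf{L}$ correctly invoked to merge the two cross terms. One minor point: no factor $\mathbf{z}^T\mathbf{z}$ actually arises in your four-term expansion, so the normalization $\|\mathbf{z}\|_2=1$ is never used there (the identity holds for any $\mathbf{z}$); it is needed only for the alternative route via $\mathbf{H}^2=\mathbf{H}$.
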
 
\begin{proof}
This identity is a routine calculation using the cyclic property of the trace.
\end{proof}

Using this result, we can outline a simple procedure for empirically estimating the HSIC.
Letting $\mathbf{K}^{(i)}$ denote the $i^{\text{th}}$ column of $\mathbf{K}$ and using an analogous notation for the
columns of $\mathbf{L}$, we make two key observations. First, we recall that $\mathbf{K}$ and $\mathbf{L}$ are symmetric real-valued matrices, and so $\mathbf{K}^{(i)}$ and $\mathbf{L}^{(i)}$ are the same as the $i^{\text{th}}$ rows of the respective matrices. Second, we note that the $i^{\text{th}}$ row sum of $\mathbf{K}$ is equal to the $i^{\text{th}}$ element of $\mathbf{K1}_n$; that is, \begin{equation}
	[\mathbf{K1}_n]_i = \sum_{j=1}^n K_{ij}.
\end{equation}
An analogous identity holds for $\mathbf{L}$. With these remarks in mind, the Algorithm~\ref{alg:hsic} describes a procedure that requires only matrix-vector products with $\mathbf{K}$ and $\mathbf{L}$. There is no need to form any matrix products.

\begin{algorithm}[htbp]
    \caption{Efficient Computation of $\text{HSIC}$ Estimator}\label{alg:hsic}
    \begin{algorithmic}[1]
        \State Set $S = 0$
        \For{$i = 1,\ldots, n$}
            \State $\mathbf{a} = \mathbf{K}^{(i)}$
            \State $\mathbf{b} = \mathbf{L}^{(i)}$
            \State $S = S + \langle \mathbf{a},\mathbf{b}\rangle_2$
            \State $u_i = \langle \mathbf{a}, \mathbf{z}\rangle_2$
            \State $v_i = \langle \mathbf{b}, \mathbf{z}\rangle_2$
        \EndFor
        
        \State \textbf{Return} $\widehat{\HSIC}=\frac{1}{n^2}[S - 2\langle \mathbf{u},\mathbf{v}\rangle_2 + \langle \mathbf{u},\mathbf{z}\rangle_2\langle \mathbf{v},\mathbf{z}\rangle_2]$.
    \end{algorithmic}
\end{algorithm}

This procedure only requires that we store one column of the matrices $\mathbf{K}$ and $\mathbf{L}$ at a time, thereby reducing the storage requirement from $\mathcal{O}(n^2)$ to $\mathcal{O}(n)$. Furthermore, Algorithm~\ref{alg:hsic} describes a single-loop Monte Carlo estimator, which significantly out-performs the double-loop Monte Carlo estimators typically employed in variance-based sensitivity analysis.

The empirical estimator for the HSIC underpins our definition of an empirical estimator for the first-order and total HSIC sensitivity indices. For each input subset \( A \subseteq \{1,\dots, p\} \), the empirical estimators for \( \mathbb{F}_A(Y) \) and \( \mathbb{T}_A(Y) \) are respectively given by \begin{equation}\label{eq:tifest} \widehat{\mathbb{F}}_A(Y) \defeq \frac{\widehat{\HSIC}(X_{ A},Y)}{\widehat{\HSIC}(X,Y)}, \quad \widehat{\mathbb{T}}_A(Y) \defeq 1 - \frac{\widehat{\HSIC}(X_{\sim A},Y)}{\widehat{\HSIC}(X,Y)}.\end{equation} The numerators and denominators in~\eqref{eq:tifest} converge to their population values at rate \(\mathcal{O}(n^{-1/2}) \), and their biases are each of order \( \mathcal{O}(n^{-1}) \) \cite{Gretton05b}. In practice, these asymptotic properties translate into smooth convergence of the estimated indices as \( n \) increases. While an unbiased estimator for the HSIC has been presented \cite{daVeiga21, Gretton05a}, the convenient expression given by \cref{eq:est} is preferred for the scope of our study. In the following section, we implement the estimator for the first-order and total HSIC sensitivity indices in several illustrative numerical experiments.

\section{Numerical Experiments}\label{sec:numerics} In this section, we demonstrate the utility of the first-order and total HSIC sensitivity indices with three numerical experiments. We first consider the Ishigami function, a common example of a model with independent inputs and a scalar output. The first-order and total-effect Sobol' indices for this model are known analytically, and we demonstrate that the corresponding HSIC sensitivity indices provide a ranking of parameter influence that is consistent with these values. 

We then present a portfolio model whose output remains scalar-valued, but whose inputs obey a nontrivial correlation structure. We demonstrate that, in this case, the total HSIC sensitivity indices provide a more accurate and interpretable ranking of parameter influence than the total-effect Sobol' indices for models with correlated inputs proposed by \cite{Hart18}. Moreover, we illustrate that the first-order and total HSIC indices can differ in the presence of correlated inputs.

Finally, we consider a model that describes the spread of cholera over the course of 300 weeks. The model response is functional-valued in this example, and we demonstrate that the resulting total HSIC sensitivity indices vary noticeably depending upon the assumed correlation structure of the inputs. We use this fact to justify the consideration of parameter correlations in real-world models, especially in the context of model reduction. 

\subsection{Ishigami function}\label{sec:ishigami}

To illustrate the practical computation of HSIC-based sensitivity indices, we consider the Ishigami function, 
a classical nonlinear test problem defined as
\begin{equation}
 Y = f(X_1, X_2, X_3) = \sin(X_1) + a\,\sin^2(X_2) + b\,X_3^4 \sin(X_1),
  \label{eq:ishigami}
\end{equation}
where \( a = 5 \), \( b = 0.1 \), and the inputs \( X_i \sim \mathcal{U}(-\pi, \pi) \) are mutually independent. 
This model is well-known for its strong nonlinearity and interaction effects: \( X_3 \) influences the output only through its interaction with \( X_1 \), while the contributions of \( X_2 \) are independent of any interactions with the other inputs. Since the inputs are mutually independent, the first-order and total HSIC sensitivity indices are identical to the first-order and total HSIC-ANOVA indices proposed in~\cite{daVeiga21}, respectively.

\begin{figure}[htbp]
     \centering
     \begin{subfigure}[b]{0.48\textwidth}
         \centering
         \includegraphics[width=\textwidth]{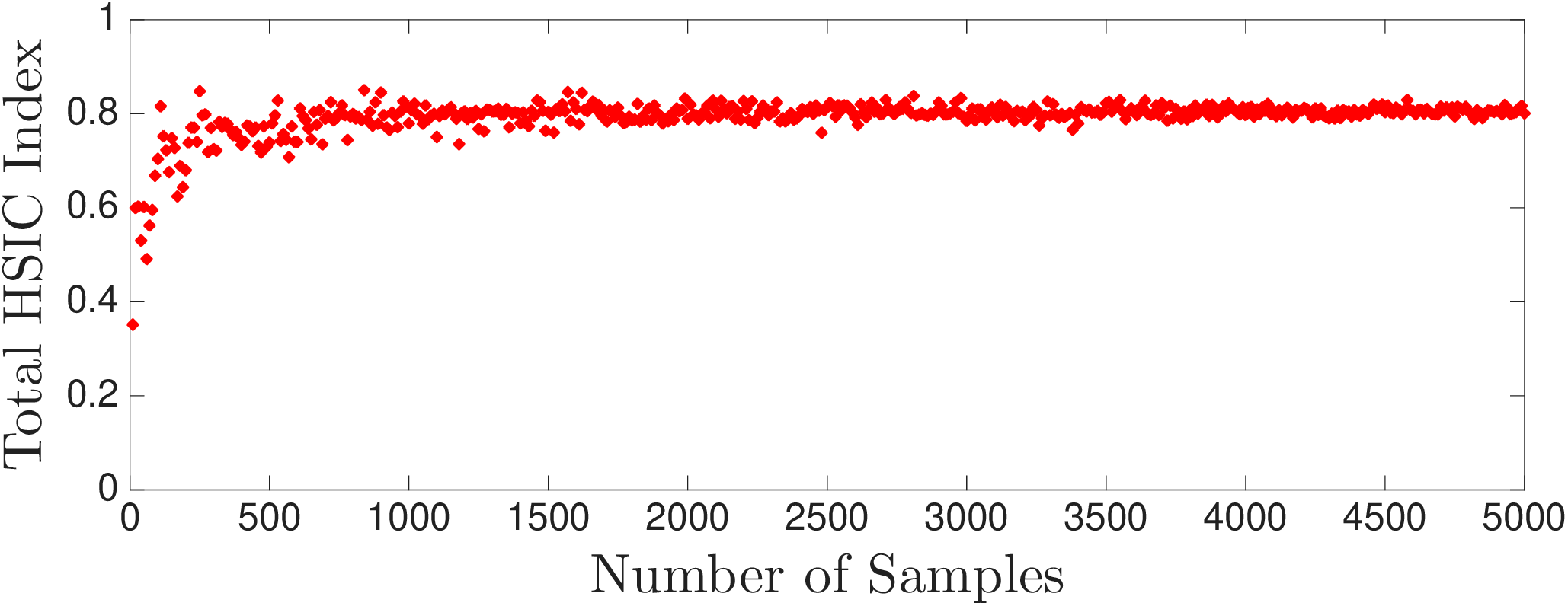}
         \caption{$\widehat{\mathbb{T}}_1(Y)$}
         \label{fig:T1}
     \end{subfigure}
     \hfill
     \begin{subfigure}[b]{0.48\textwidth}
         \centering
         \includegraphics[width=\textwidth]{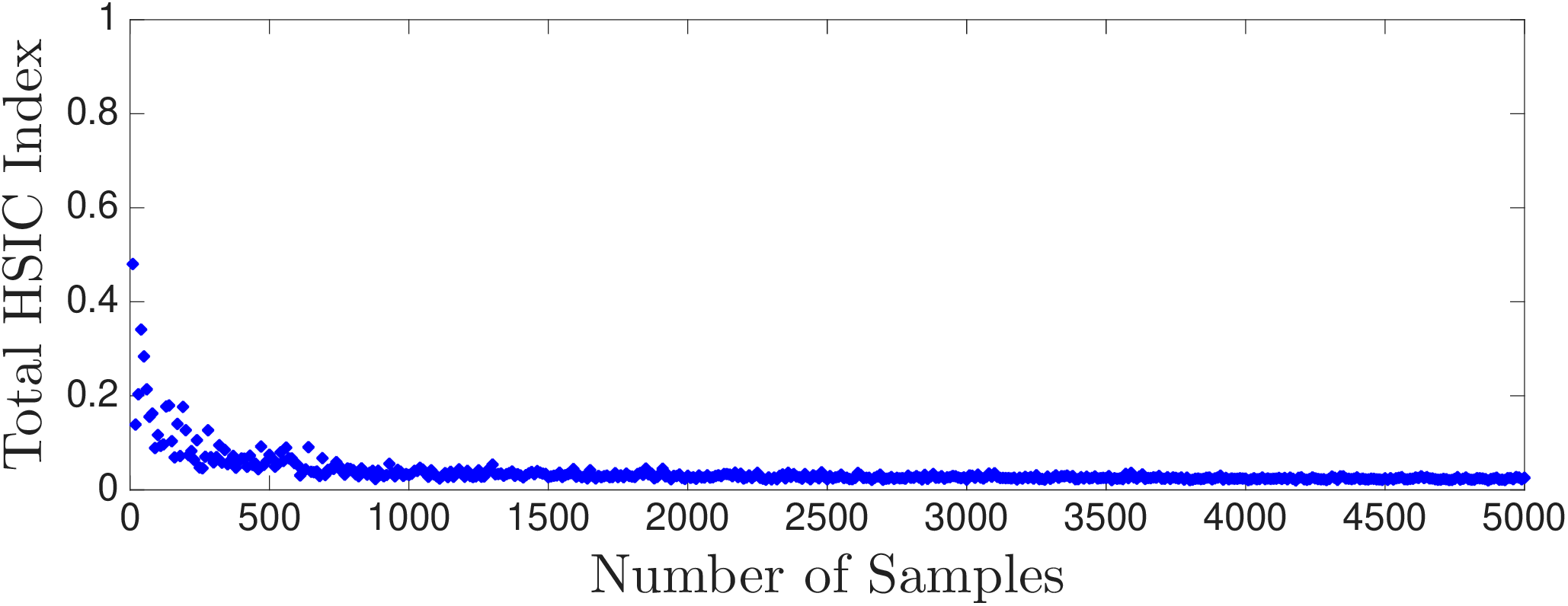}
         \caption{$\widehat{\mathbb{T}}_2(Y)$}
         \label{fig:T2}
     \end{subfigure}
     \hfill
     \begin{subfigure}[b]{0.48\textwidth}
         \centering
         \includegraphics[width=\textwidth]{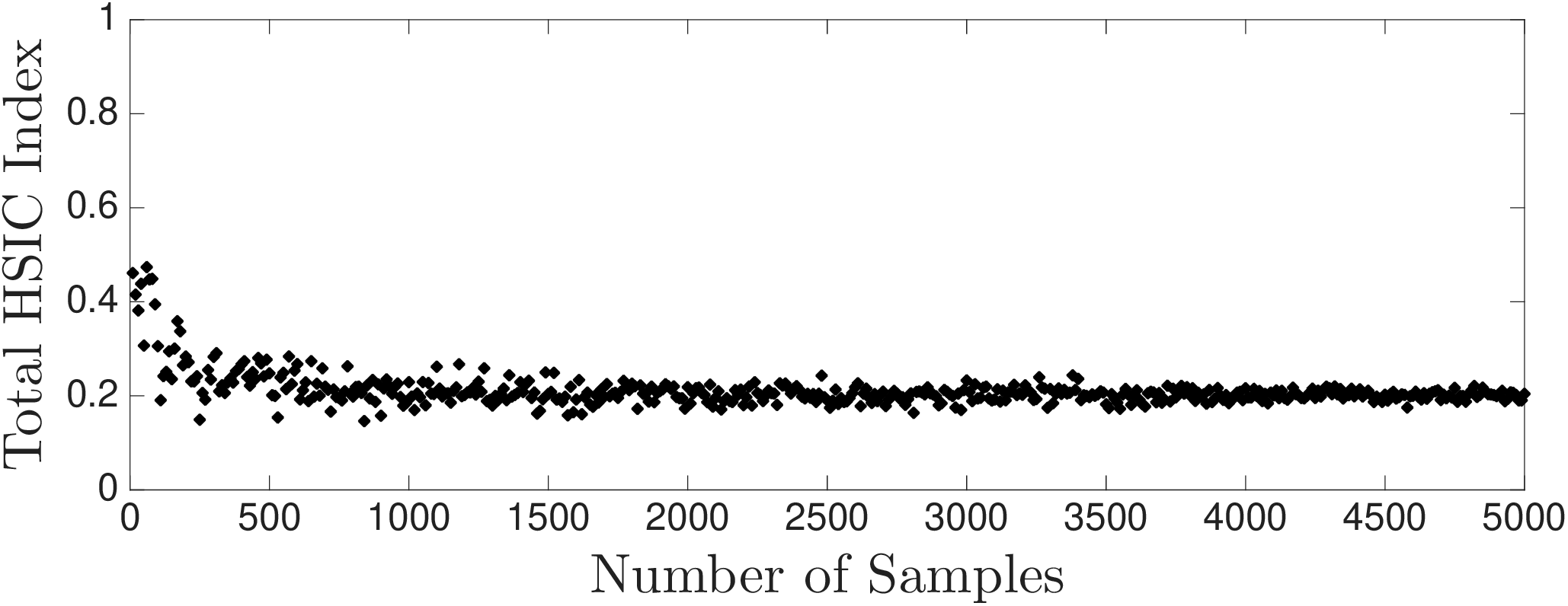}
         \caption{$\widehat{\mathbb{T}}_3(Y)$}
         \label{fig:T3}
     \end{subfigure}
        \caption{Estimated total HSIC sensitivity indices as functions of sample size $n$.}
        \label{fig:ishigamiconv}
\end{figure}

To assess sampling adequacy, Figure~\ref{fig:ishigamiconv} depicts the estimates $\widehat{\mathbb{T}}_i(Y)$ as a function of the sample size. The estimates stabilize after around \( n = 700 \) samples, indicating that any larger choice of \( n \) provides sufficient accuracy for all three input parameters. A similar computation shows that the estimates $\widehat{\mathbb{F}}_i(Y)$ stabilize after around \( n = 600 \) samples. Given this analysis, we draw \( n = 1,000 \) independent samples of \( (X_1, X_2, X_3) \) and evaluate \( Y = f(X_1, X_2, X_3) \). 
For each input \( X_i \), first-order and total HSIC indices are computed using the estimator in \cref{eq:tifest} with the kernels as prescribed in Section~\ref{sec:theory} and bandwidths determined by the median heuristic~\cref{eq:median_heuristic}. 
Distance correlation indices and total-effect Sobol' indices \( S_i^T \) are also computed for comparison. Figure~\ref{fig:ishigami} depicts the resulting indices. 

\begin{figure}[htbp]
     \centering
      \begin{subfigure}[htbp]{0.49\textwidth}\label{fig:ishigami}
         \centering
         \includegraphics[width=0.8\textwidth]{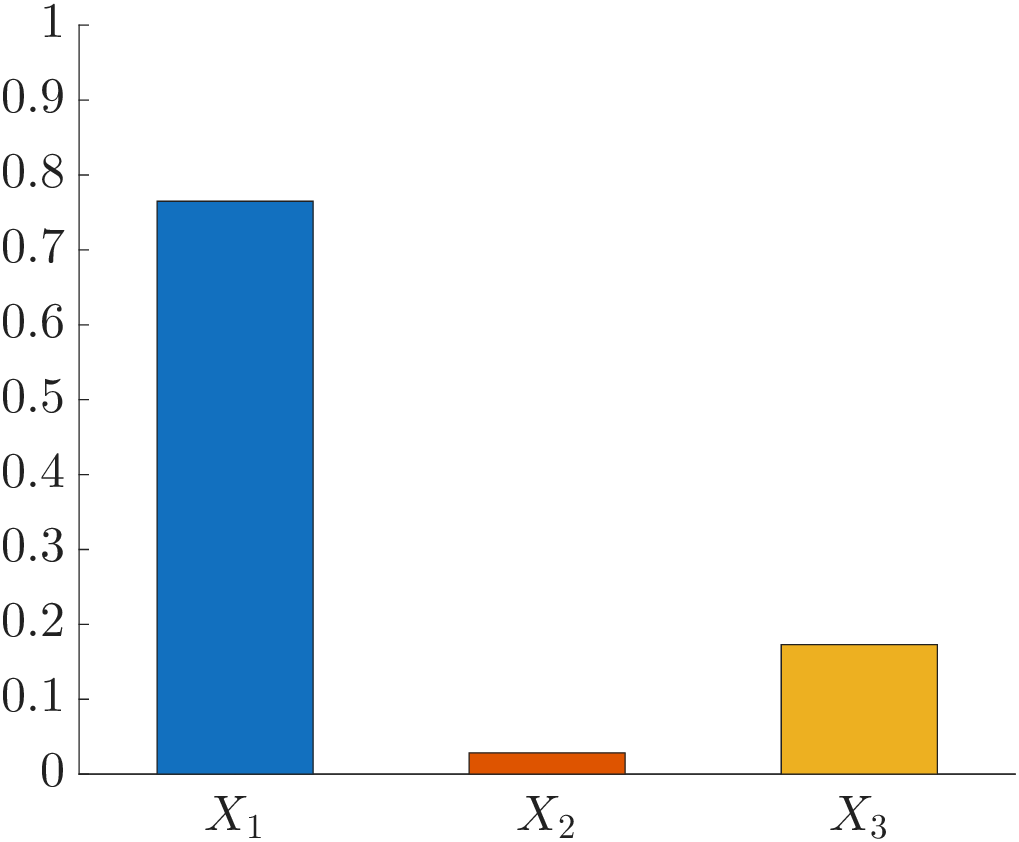}
         \caption{First-Order HSIC Indices}
         \label{fig:IshigamiFirst}
     \end{subfigure}
     \hfill
     \begin{subfigure}[htbp]{0.49\textwidth}
         \centering
         \includegraphics[width=0.8\textwidth]{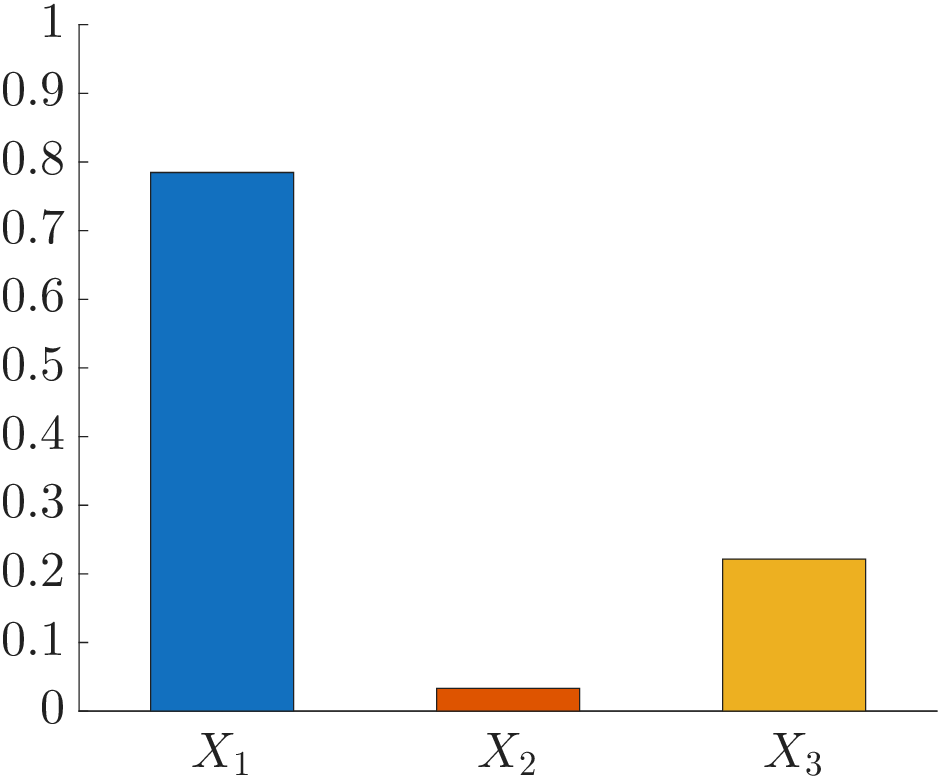}
         \caption{Total HSIC Indices}
         \label{fig:IshigamiTotal}
     \end{subfigure}

     \begin{subfigure}[htbp]{0.49\textwidth}
         \centering
         \includegraphics[width=0.8\textwidth]{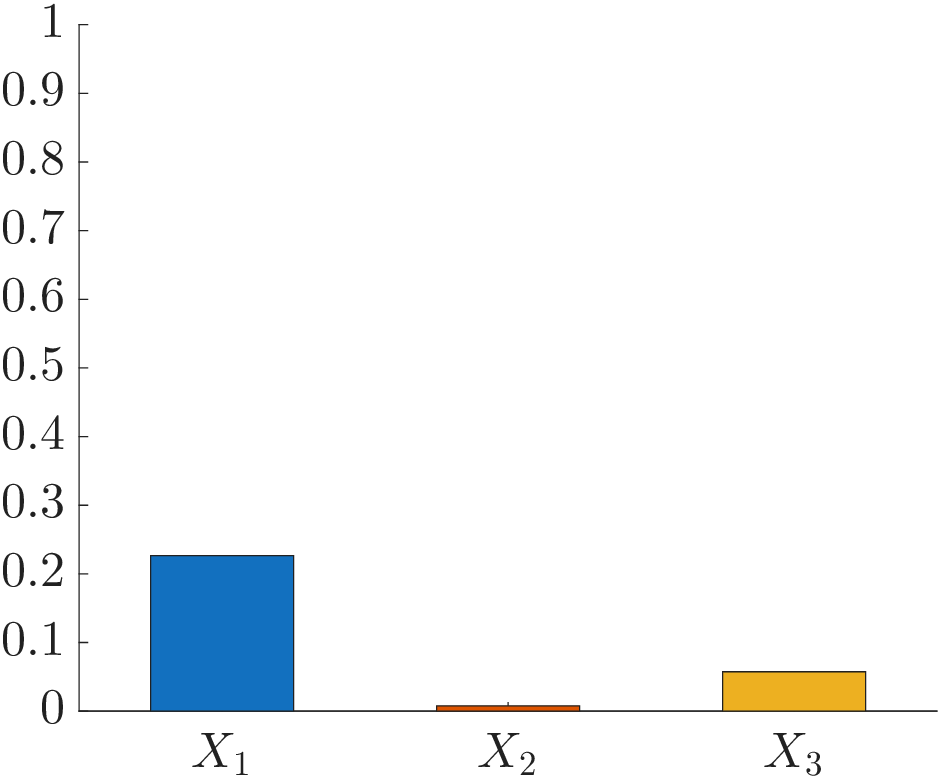}
         \caption{Distance Correlation Indices}
         \label{fig:IshigamiDcorr}
     \end{subfigure}
     \hfill
     \begin{subfigure}[htbp]{0.49\textwidth}
         \centering
         \includegraphics[width=0.8\textwidth]{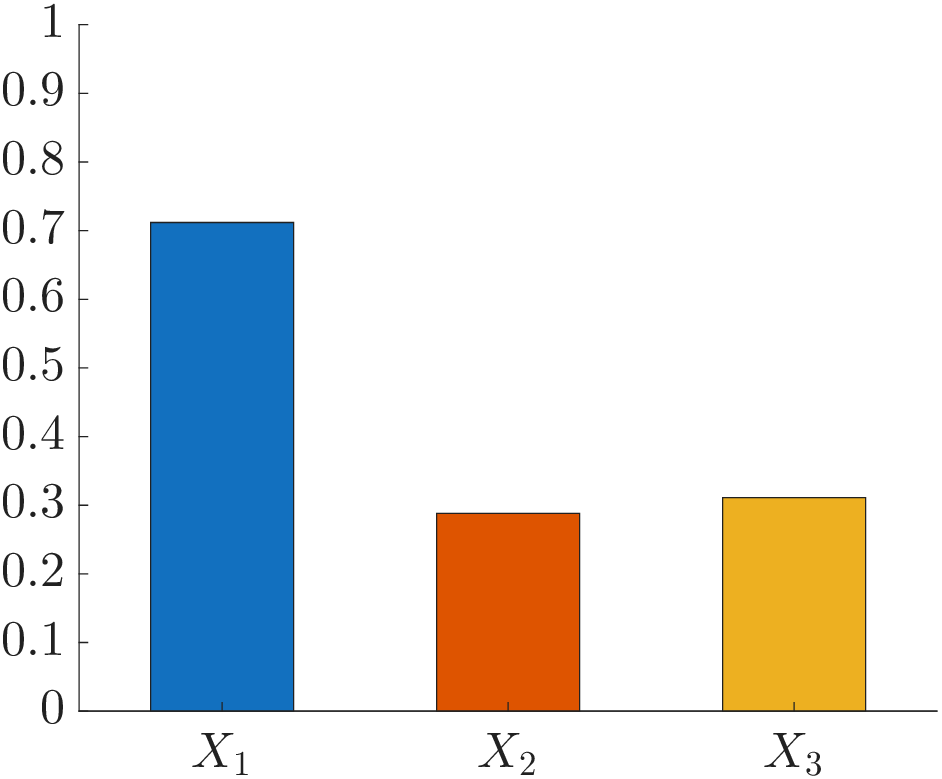}
         \caption{Total-effect Sobol' Indices}
         \label{fig:IshigamiSobol}
     \end{subfigure}
        \caption{A comparison of sensitivity indices for the Ishigami function.}
        \label{fig:ishigami}
\end{figure}

All four indices provide a consistent ranking of parameter influence in Figure~\ref{fig:ishigami}. This agreement reflects the fact that, under independent inputs, the HSIC- and variance-based measures capture similar notions of importance: while Sobol' indices quantify contributions to output variance, the HSIC indices detect general statistical dependence. The magnitudes of the distance correlation indices differ substantially from those of the other indices due to normalization differences inherent to the metric. Among the three other indices, any discrepancies in magnitude are attributable to the HSIC's moment-independence, which quantifies dependence structures beyond second-order moments. 

This experiment highlights that the first-order and total HSIC sensitivity indices retain the interpretability of variance-based measures while considering higher-order dependences. The magnitudes of these two indices differ only slightly in Figure~\ref{fig:ishigami}, agreeing with the results first produced in~\cite[Section 5.1]{daVeiga21}. It is suggested in that paper that total HSIC indices provide additional information over first-order ones only for very specific cases. Our work provides further evidence for this claim, at least for models with independent inputs. Next, we investigate the performance of these indices for models with dependent inputs.

\subsection{Correlated portfolio model}\label{sec:corrport}
In this example, we illustrate how the first-order and total HSIC sensitivity indices behave when
the assumption of parameter independence is removed. We examine a model that was
first considered in~\cite{Hart18} for their study of  Sobol' indices for
models with correlated inputs. The model is defined by
\begin{equation}\label{eq:corrport} 
    Y = 20X_1 + 16X_2 + 12X_3 + 10X_4 + 4X_5,
\end{equation}
where $X = [X_1,\dots,X_5]$ follows a multivariate normal distribution $\mathcal N(\mu,\Sigma)$.

The mean vector $\mu$ and covariance matrix $\Sigma$ are given by
\begin{equation}
\mu =
\begin{bmatrix}
0\\0\\0\\0\\0
\end{bmatrix},
\qquad
\Sigma =
\begin{bmatrix}
1 & 0.5\rho & 0.5\rho & 0 & 0.8\rho\\
0.5\rho & 1 & 0 & 0 & 0\\
0.5\rho & 0 & 1 & 0 & 0.3\rho\\
0 & 0 & 0 & 1 & 0\\
0.8\rho & 0 & 0.3\rho & 0 & 1
\end{bmatrix}.
\end{equation} In this example, $\rho\in[0,1]$ controls the strength of correlation among
subsets of the inputs.  

It was noted in~\cite[Figure 1]{Hart18} that as $\rho$
increases toward $1$, the indices corresponding to the four correlated
parameters collapse and the only uncorrelated variable, $X_4$, appears to be
dominant.  The authors explain this behavior as follows: when correlations
between parameters are strengthened, the influence of one variable upon the
output may be approximated by the other variables. This phenomenon does not
exist in the case of independent inputs, but explains why the total-effect
Sobol' indices are decreasing in $\rho$ for all parameters except for $X_4$. 
Despite this explanation, we demonstrate that Sobol' analysis fails to provide
an accurate parameter ranking in the case of correlated inputs, when the goal is
model reduction by fixing non-influential parameters. 

\begin{figure}[htbp]
     \centering
     \begin{subfigure}[htbp]{0.48\textwidth}
         \centering
         \includegraphics[width=\textwidth]{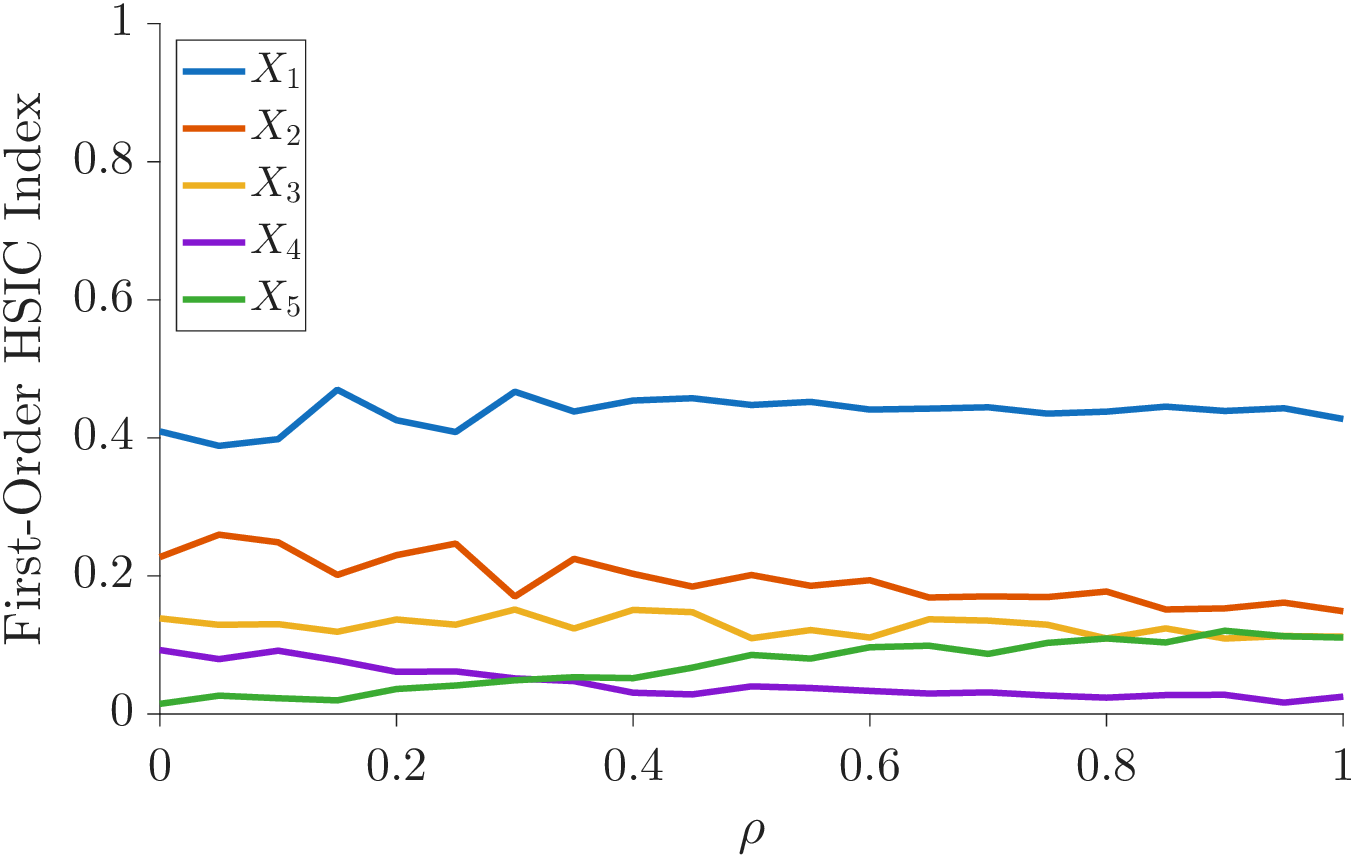}
     \end{subfigure}   
     \hfill \begin{subfigure}[htbp]{0.48\textwidth}
         \centering
         \includegraphics[width=\textwidth]{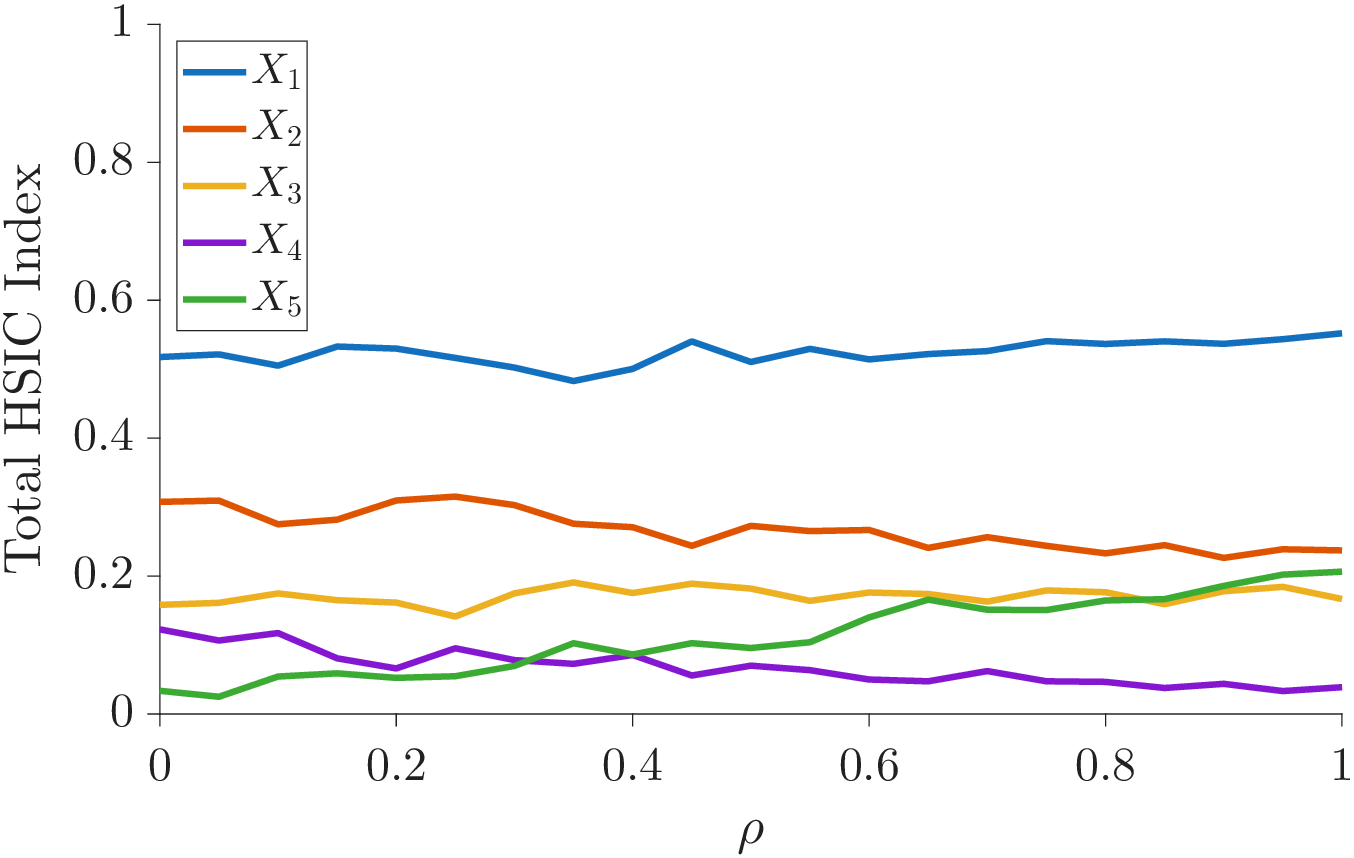}
     \end{subfigure}       
     \begin{subfigure}[htbp]{0.48\textwidth}
         \centering
         \includegraphics[width=\textwidth]{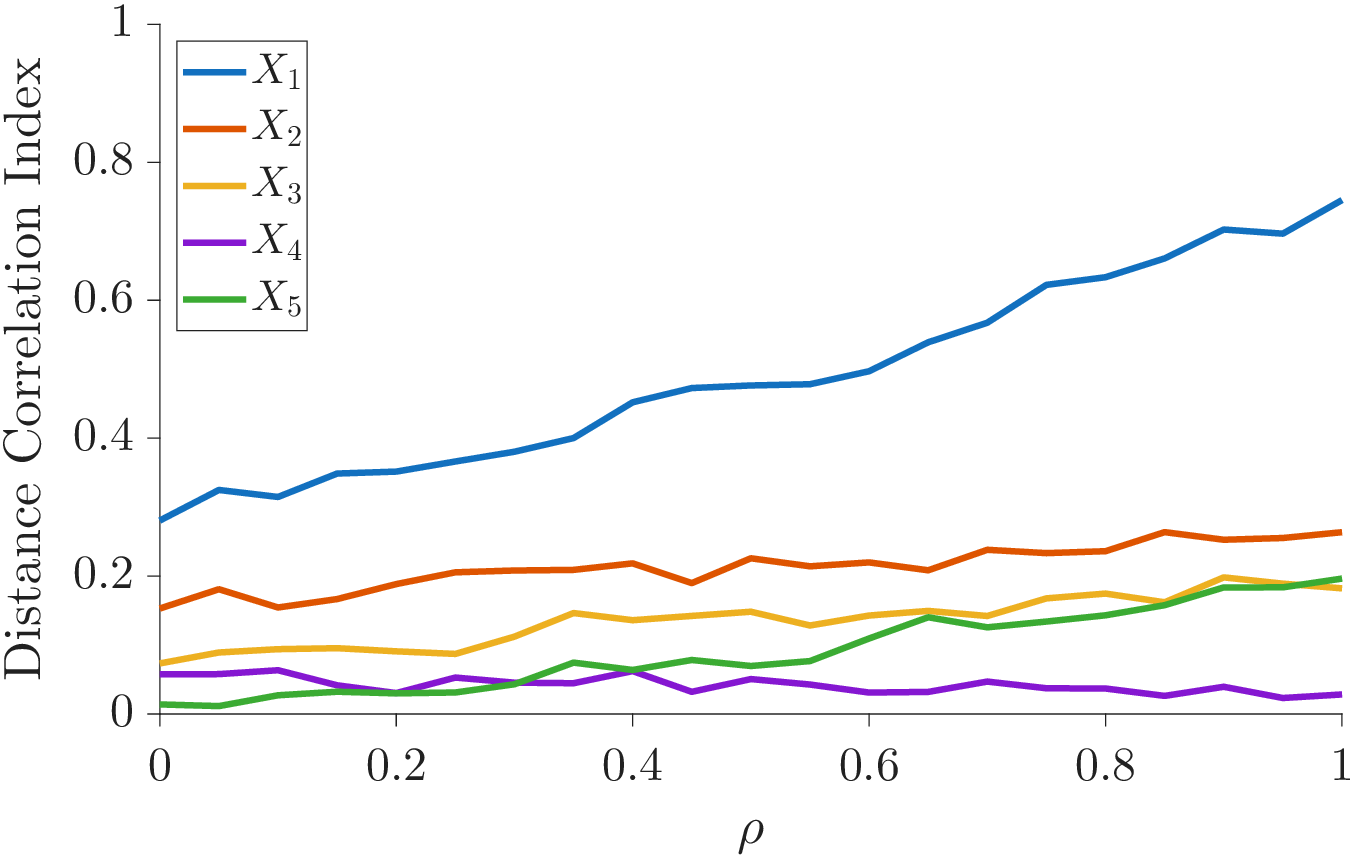}
     \end{subfigure}
     \caption{Sensitivity indices for the correlated portfolio model as functions of $\rho$.}
        \label{fig:corr}
\end{figure}

We compute distance correlation, first-order, and total HSIC sensitivity indices
for the correlation values $\rho_j = j/20$ for  $j=0,\dots,20$, using $n=2,000$
samples per computation. The resulting indices are plotted in
Figure~\ref{fig:corr} as functions of $\rho$. Unlike the Sobol' or distance
correlation indices, the HSIC-based indices remain stable across increasing
correlation. The ranking of influential variables is preserved for all $\rho$:
$X_1$ remains most influential, while $X_5$ gradually gains importance as its
correlations with the dominant variables strengthen. At the high-correlation
extreme ($\rho\approx1$), total HSIC indices indicate that $X_5$ surpasses $X_4$
and $X_3$ in importance, consistent with the model structure. The total HSIC
index is well-suited for model reduction, but does not always provide more
information than its first-order counterpart. 
We also point to the work~\cite{daVeiga21}, which introduces Shapley-HSIC
indices for the purposes of model analysis, i.e., understanding the distinction 
of first, mixed, and total effects on the output.

It should be noted that while the distance correlation and HSIC-based indices provide the same parameter rankings, the magnitudes of these indices are only interpretable in the case of first-order and total HSIC. Moreover, we notice that the first-order and total HSIC indices differ significantly in magnitude, which contrasts their typical agreement in models with independent inputs. In models with correlated inputs, the total HSIC indices aggregate more interactions than the first-order HSIC indices, providing a more robust ranking of parameter influence (compare, e.g., the values of $\widehat{\mathbb{F}}_3(Y)$ and $\widehat{\mathbb{F}}_5(Y)$ with $\widehat{\mathbb{T}}_3(Y)$ and $\widehat{\mathbb{T}}_5(Y)$ at $\rho=1$). 

To qualitatively validate these findings, we performed parameter dimension reduction at $\rho=0$ and $\rho=1$ by fixing the least influential parameter ($X_5$ and $X_4$, respectively, as indicated by Figure~\ref{fig:corr}) at its mean value. We compare the resulting conditional output distribution to that of the full model using $n=1\times 10^7$ Monte Carlo samples. As shown in Figure~\ref{fig:corrportMR}, the reduced and full output distributions agree closely in both cases, supporting the conclusion that total HSIC sensitivity indices correctly identify parameters whose uncertainty can be neglected without materially affecting the model response.

\begin{figure}[htbp]
     \centering
     \begin{subfigure}[b]{0.48\textwidth}
         \centering
         \includegraphics[width=\textwidth]{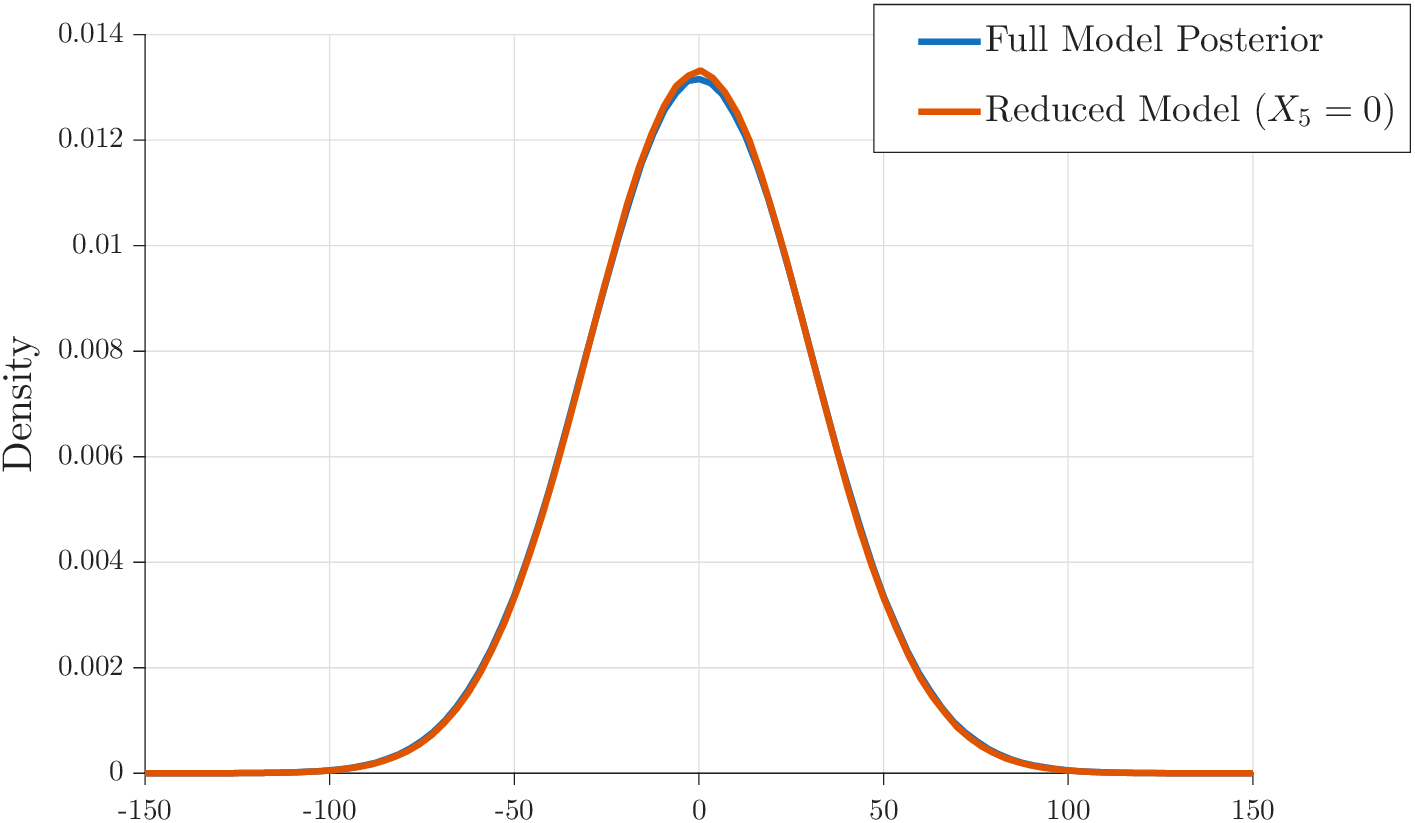}
         \caption{$\rho = 0$}
         \label{fig:rho0MR}
     \end{subfigure}
     \hfill
     \begin{subfigure}[b]{0.48\textwidth}
         \centering
         \includegraphics[width=\textwidth]{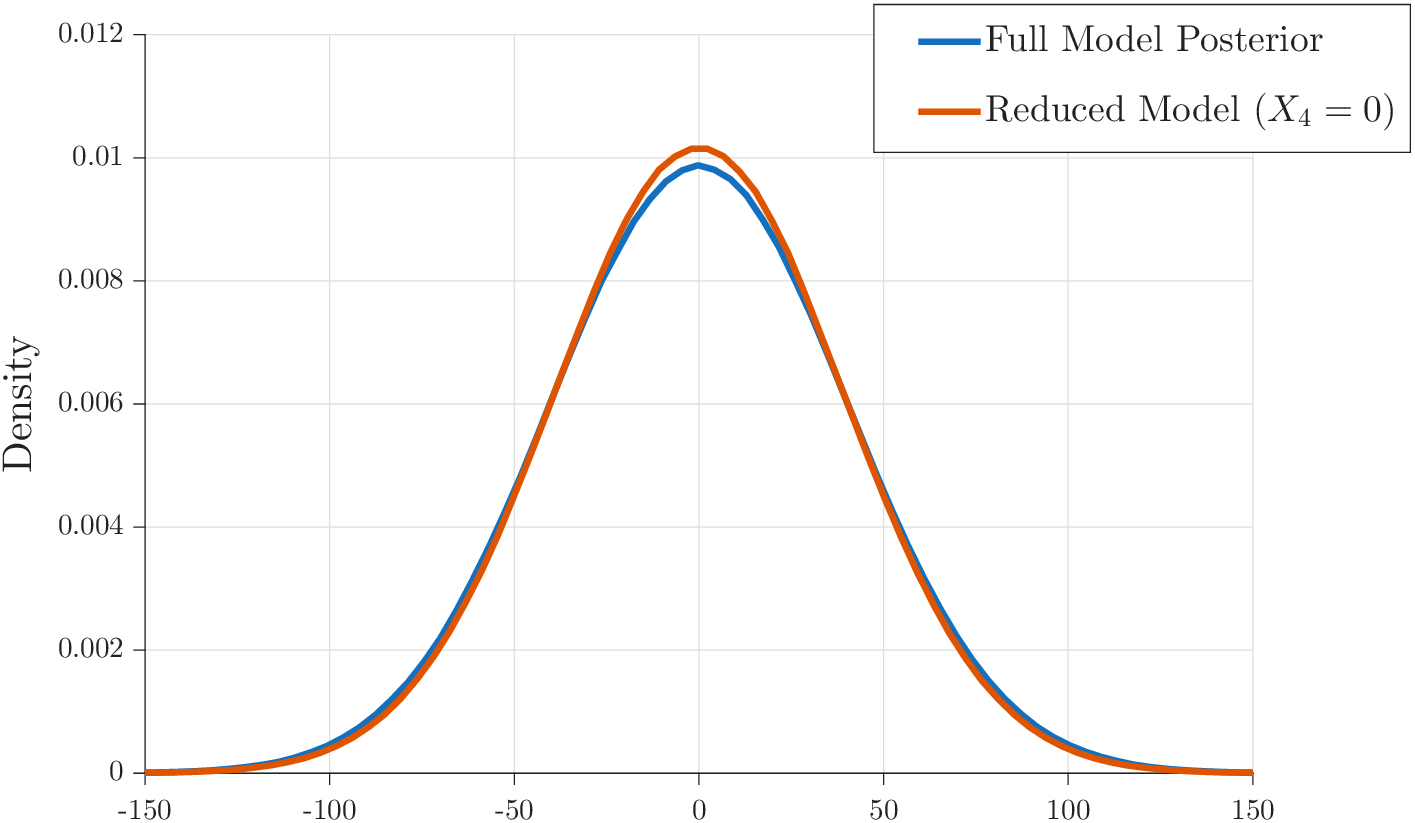}
         \caption{$\rho = 1$}
         \label{fig:rho1MR}
     \end{subfigure}        
     \caption{Model reduction informed by Figure~\ref{fig:corr} at different values of $\rho$.}
        \label{fig:corrportMR}
\end{figure}

\begin{figure}[htbp]
	\centering
	\includegraphics[width=0.55\textwidth]{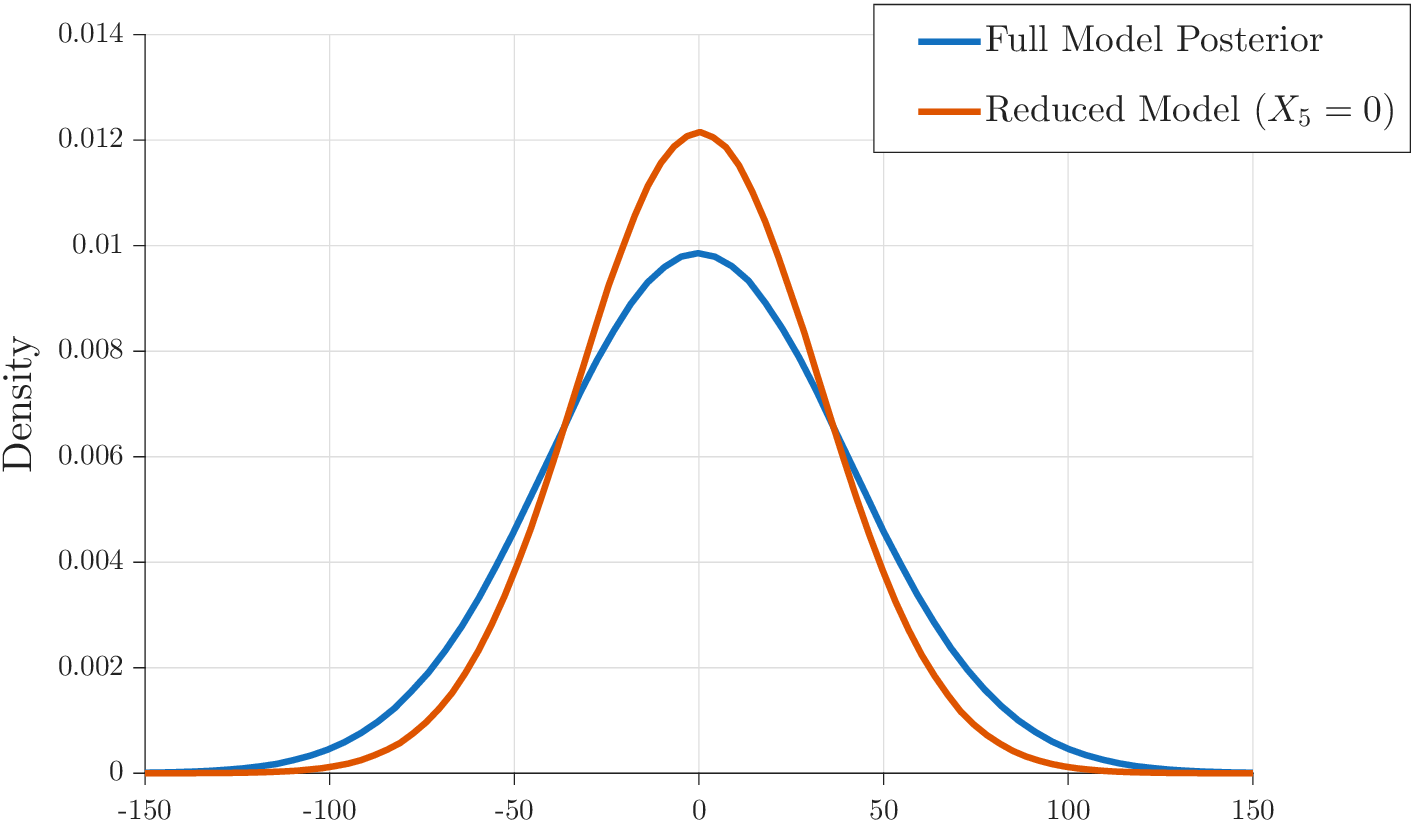}
	\caption{Model reduction at $\rho=1$ informed by the approach in~\cite{Hart18}.}
	\label{fig:rho1MRFalse}
\end{figure}

We also used this method to demonstrate the limitations of the total-effect
Sobol' indices for parameter dimension reduction. Indeed, \cite{Hart18} suggests that $X_5$
holds minimal influence over the model output when $\rho=1$.  This may suggest 
$X_5$ as a candidate to be fixed at its mean value for the purpose of dimension
reduction. We compare the conditional output distribution to that of the full
model using $n=1\times 10^7$ Monte Carlo samples in Figure~\ref{fig:rho1MRFalse}. The
significant discrepancy between the distributions suggests that $X_5$ holds more
influence over the model than the Sobol' indices describe, further demonstrating
the advantage of total HSIC sensitivity indices over variance-based sensitivity
measures. 

Overall, this example demonstrates that total HSIC indices preserve interpretability and ranking consistency under correlated inputs. They correctly identify parameter importance without being misled by shared variability, offering a principled and robust alternative to variance-based sensitivity measures.

\subsection{Function-valued cholera model}\label{sec:cholera}
As a final example, we apply the total HSIC framework to a model with correlated
inputs and a function-valued output.  Specifically, we consider the nonlinear
epidemiological model of cholera transmission introduced in \cite{Cholera} and
studied in \cite{Alexanderian20, Cleaves19}. This model describes the coupled
dynamics between the human population and two environmental bacterial
reservoirs, capturing both direct and indirect infection pathways. The objective
is to quantify the sensitivity of the infected population $I(t)$ to
uncertainties in the epidemiological parameters, and to evaluate whether
HSIC-based indices remain robust in this biologically realistic,
correlated-input setting.

The population is divided into susceptible ($S$), infectious ($I$), and recovered ($R$) compartments, and the environmental bacteria are split into highly infectious ($B_H$) and lowly infectious ($B_L$) concentrations. The state vector $y = (S, I, R, B_H, B_L)^\top \in \mathbb{R}^5$ evolves according to
\begin{equation}\label{eq:cholera}\begin{split}
\frac{dS}{dt} &= b N_{\mathrm{pop}} 
- \beta_L S \frac{B_L}{\kappa_L + B_L} 
- \beta_H S \frac{B_H}{\kappa_H + B_H}
- bS, \nonumber\\[2pt]
\frac{dI}{dt} &= 
\beta_L S \frac{B_L}{\kappa_L + B_L} 
+ \beta_H S \frac{B_H}{\kappa_H + B_H}
- (\gamma + b) I, \nonumber\\[2pt]
\frac{dR}{dt} &= \gamma I - bR, \nonumber\\[2pt]
\frac{dB_H}{dt} &= \xi I - \chi B_H, \nonumber\\[2pt]
\frac{dB_L}{dt} &= \chi B_H - \delta B_L. \end{split}
\end{equation}
Initial conditions are given by $(S(0), I(0), R(0), B_H(0), B_L(0)) = (N_{\mathrm{pop}}-1, 1, 0, 0, 0)$, with $N_{\mathrm{pop}} = 10,000$, and the system is solved up to $T=300$ weeks using MATLAB’s \texttt{ode45} solver. The model parameters, along with their units and nominal values adopted from \cite{Cleaves19, Cholera}, are summarized in Table 1.

\begin{table}[htbp]
\begin{tabular}{llll}
\toprule
Parameter & Description & Units & Nominal value \\
\midrule
$\beta_L$ & Rate of drinking low-infectious cholera & week$^{-1}$ & 1.5 \\
$\beta_H$ & Rate of drinking high-infectious cholera & week$^{-1}$ & 7.5 \\
$\kappa_L$ & $B_L$ cholera carrying capacity & bacteria·mL$^{-1}$ & $10^6$ \\
$\kappa_H$ & $B_H$ cholera carrying capacity & bacteria·mL$^{-1}$ & $7\times10^8$ \\
$b$ & Human birth/death rate & week$^{-1}$ & $1/1560$ \\
$\chi$ & Decay rate from $B_H$ to $B_L$ & week$^{-1}$ & 1/168 \\
$\xi$ & $B_H$ water contamination rate & $\frac{\text{bacteria·mL$^{-1}$}}{\text{individual·week}}$ & 70 \\
$\delta$ & Death rate of $B_L$ cholera & week$^{-1}$ & 7/30 \\
$\gamma$ & Recovery rate from cholera & week$^{-1}$ & 7/5 \\
\bottomrule
\end{tabular}\label{tab:cholera_params}
\caption{Nominal cholera model parameters from \cite{Cholera, Cleaves19}.}
\end{table}

We simulate a realistic study by obtaining a data-informed estimate of parameter correlations. As described in great detail by \cite{SmithUQ}, we perform an ordinary least squares (OLS) fit of the model to synthetic data to obtain estimates of the mean parameter vector and corresponding empirical correlation matrix. The correlation structure, depicted in Figure~\ref{fig:cholera_corr}, reveals strong dependencies among the transmission and bacterial parameters, particularly $(\beta_H, \beta_L)$ and $(\xi, \chi)$, corresponding to shared infection and decay processes.

\begin{figure}[htbp]
    \centering
    \includegraphics[width=0.47\textwidth]{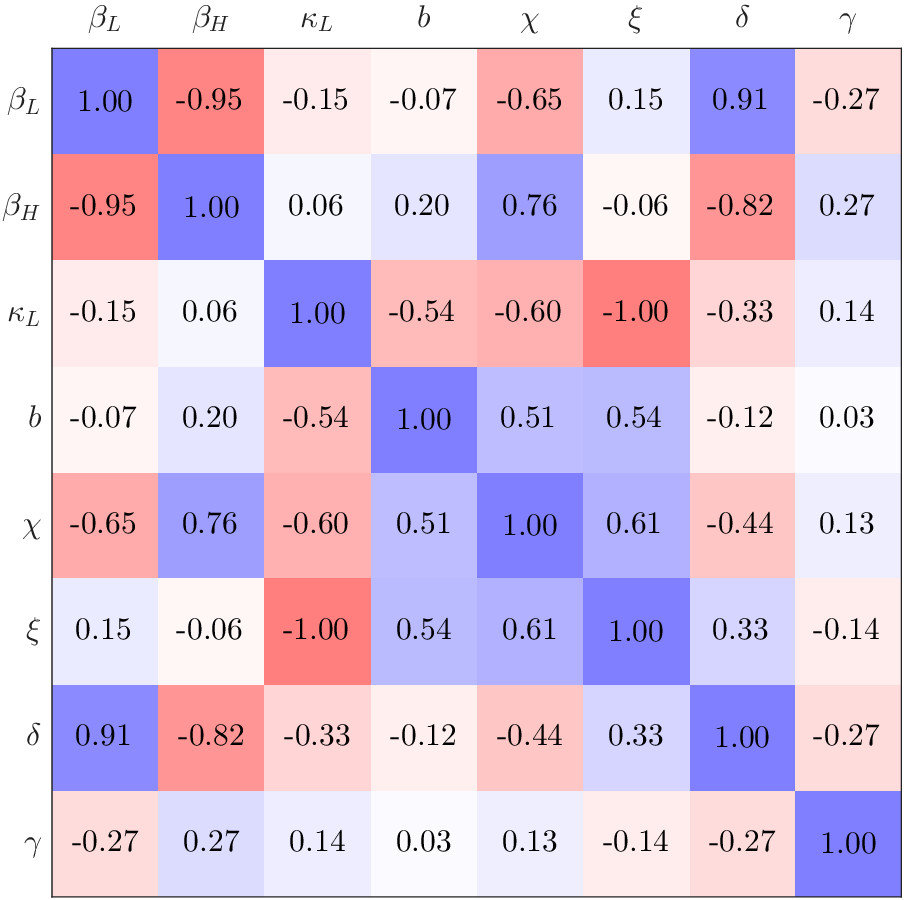}
    \caption{Empirical parameter correlation matrix obtained from OLS estimation. Strong correlations occur between parameters governing infection and bacterial dynamics.}
    \label{fig:cholera_corr}
\end{figure}

The quantity of interest is the infected population $I(t)$ over $t \in [0,300]$. We compute the total HSIC indices for each parameter using $n=1,500$ samples drawn from the multivariate normal distribution with mean vector and covariance matrix obtained through the OLS estimates computed above. As a comparison, we also compute the total HSIC indices using $n=1,500$ samples drawn from independent uniform distributions with bounds determined by $\pm10\%$ of the mean vector determined by the OLS estimate. The subsequent indices vary significantly, and the results are displayed in Figure~\ref{fig:cholindices}. 

\begin{figure}[htbp]
     \centering
     \begin{subfigure}[b]{0.48\textwidth}
         \centering
         \includegraphics[width=\textwidth]{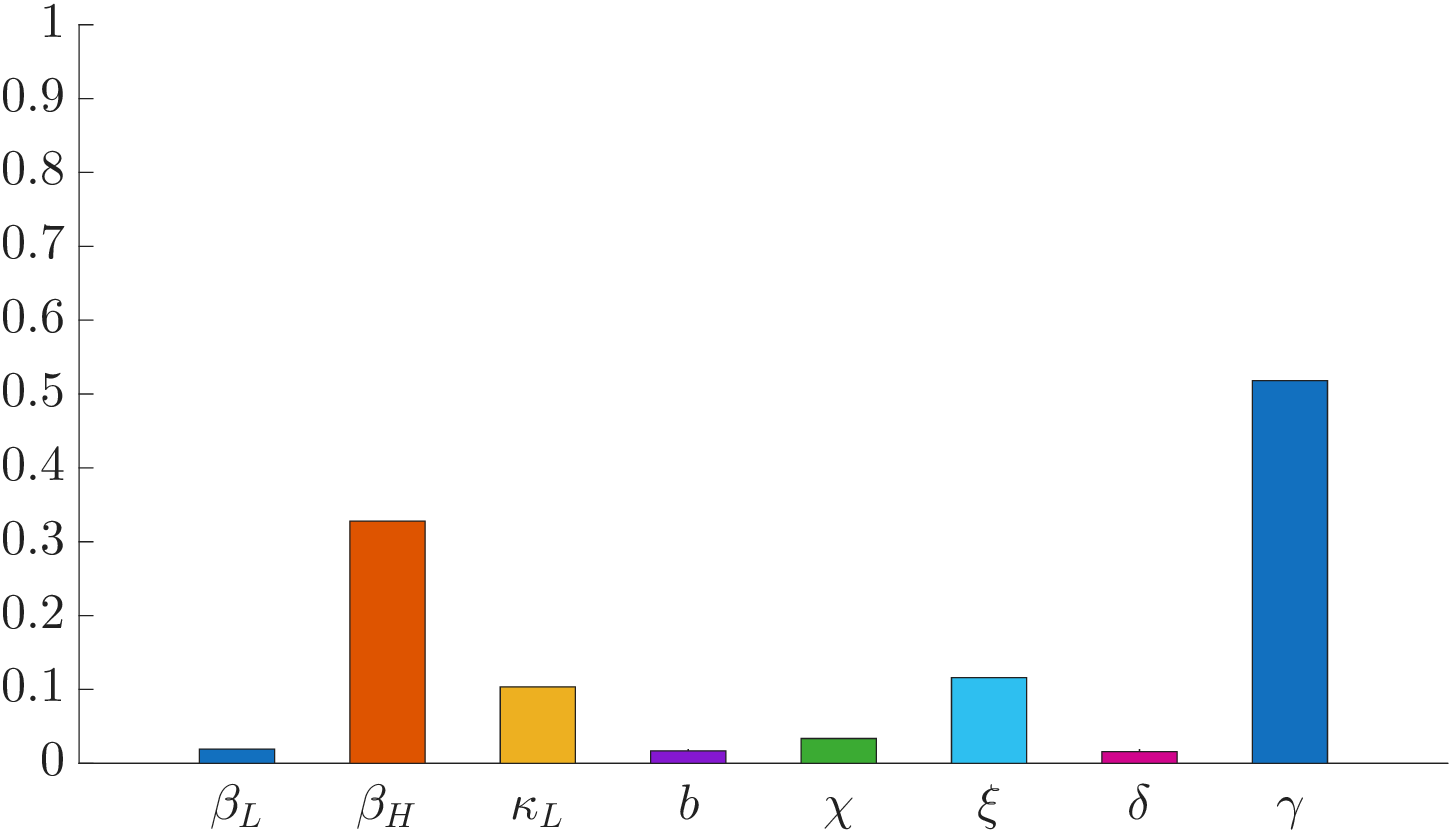}
         \caption{Independent uniform sampling}
         \label{fig:choluniform}
     \end{subfigure}
     \hfill
     \begin{subfigure}[b]{0.48\textwidth}
         \centering
         \includegraphics[width=\textwidth]{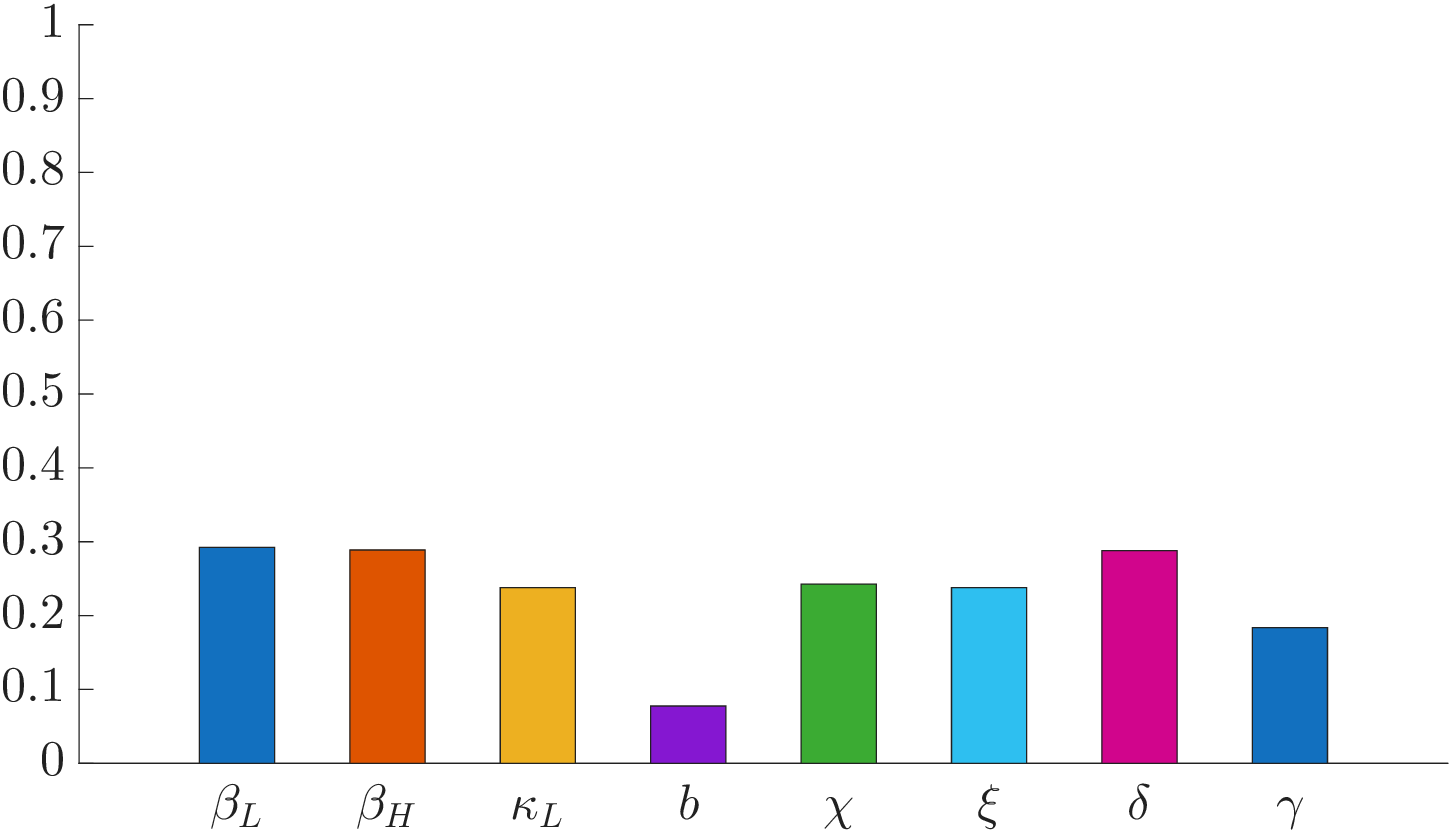}
         \caption{Multivariate normal sampling}
         \label{fig:cholcorr}
     \end{subfigure}        
     \caption{A comparison of total HSIC indices under the assumptions of independent (Figure~\ref{fig:choluniform}) and correlated (Figure~\ref{fig:cholcorr}) parameters.}
        \label{fig:cholindices}
\end{figure}

Both sets of total HSIC indices identify $\beta_H$ as significantly influencing the infection curve $I(t)$. Similarly, both sets of indices capture the minimal effects of $b$ upon the model output. The remaining parameters' influence varies significantly between the assumptions of independent and correlated inputs, reinforcing our claim that the parameter correlation structure should not be neglected when performing sensitivity analysis. In particular, the correlation structure greatly influences the feasibility of parameter dimension reduction based on the total HSIC sensitivity indices. 

\begin{figure}[htbp]
    \centering
    \includegraphics[width=0.7\textwidth]{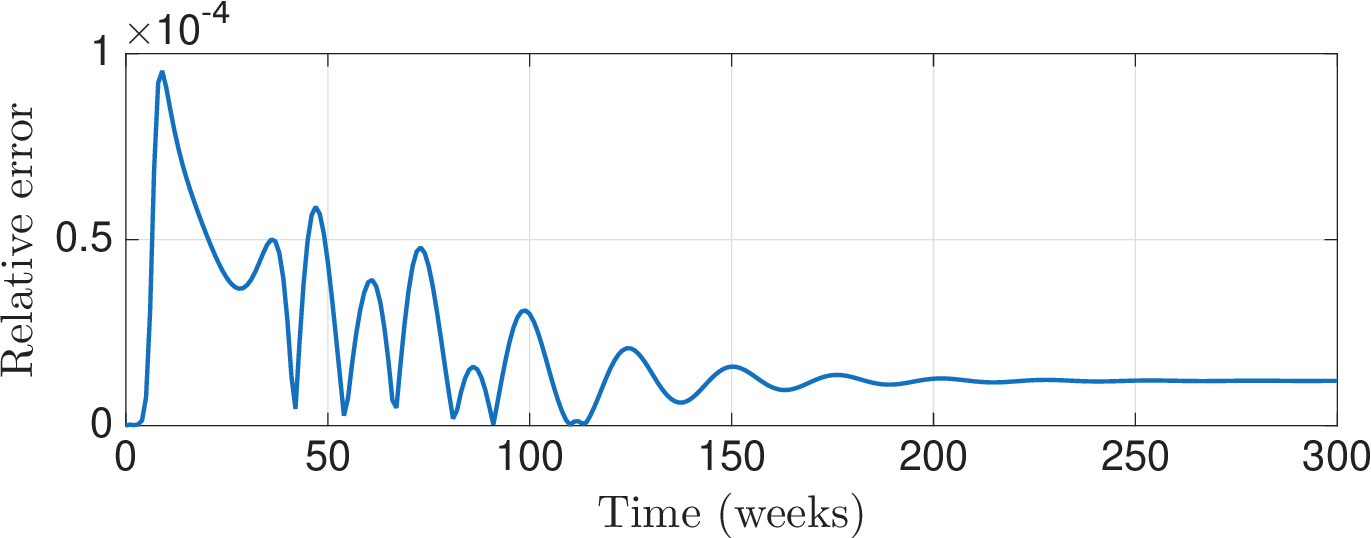}
    \caption{Relative error between the full and reduced posterior distributions of $I(t)$ after fixing $b$ at its mean value. The small error supports the use of HSIC-guided model reduction.}
    \label{fig:cholera_error}
\end{figure}

Figure~\ref{fig:cholcorr} identifies $b$ as the least influential parameter. We
therefore fix $b$ at its OLS mean value and generate the full and reduced
distributions of the model output using $n=1\times10^6$ Monte Carlo samples.
Figure~\ref{fig:cholera_error} shows the relative error between the mean curves
of the full and reduced distributions of $I(t)$, demonstrating that fixing $b$
introduces negligible error. This result supports the exclusion of $b$ from the
model for simplified inference.  The present experiment demonstrates that total
HSIC indices extend naturally to time-dependent and correlated epidemiological
models, yielding interpretable and stable sensitivity rankings that can guide
principled model reduction. 

\section{Conclusions}\label{sec:conc}
The augmented kernel construction presented in this work guarantees the property
of monotonicity under marginalization for HSIC-based sensitivity analysis. 
Under this framework, the first-order and total HSIC sensitivity indices accommodate a wide
variety of parameter types, are interpretable as a share of total influence over
the model output, and require no assumptions of independence among the inputs. 
Our numerical experiments demonstrate that the HSIC-based indices
correctly identify non-influential parameters in the presence of strong
statistical dependence among inputs.  As such, these 
indices can serve as flexible tools for global sensitivity analysis.

The estimation of HSIC-based indices for high-dimensional models is an important
line of future inquiry. Some studies have reported degradations in the
statistical power of the HSIC for independence testing in problems with
high-dimensional parameters~\cite{Zhang2023,Zhu2020}. This was also noted in our
preliminary numerical tests with high-dimensional models in the context of
sensitivity analysis. Specifically, these tests revealed that
the median heuristic, which is a commonly used approach for kernel tuning, loses
its effectiveness for models with high-dimensional parameters. 
In our future work, we will investigate
suitable kernel tuning techniques and estimators to improve scalability of
HSIC-based indices in high-dimensional models.  A related idea is the use of
efficient surrogate models to relieve the sampling procedure from potentially
expensive model evaluations.  

Furthermore, we plan to investigate the decomposition of total HSIC indices into
component terms to achieve an attribution of influence analogous to the
HSIC-ANOVA decomposition, but without the assumption of parameter independence.


\bibliographystyle{siamplain}
\bibliography{thesisrefs}

\end{document}


\maketitle

\section{A detailed example}

Here we include some equations and theorem-like environments to show
how these are labeled in a supplement and can be referenced from the
main text.
Consider the following equation:
\begin{equation}
  \label{eq:suppa}
  a^2 + b^2 = c^2.
\end{equation}
You can also reference equations such as \cref{eq:matrices,eq:bb} 
from the main article in this supplement.

\lipsum[100-101]

\begin{theorem}
An example theorem.
\end{theorem}

\lipsum[102]
 
\begin{lemma}
An example lemma.
\end{lemma}

\lipsum[103-105]

Here is an example citation: \cite{KoMa14}.

\section[Proof of Thm]{Proof of \cref{thm:bigthm}}
\label{sec:proof}

\lipsum[106-112]

\section{Additional experimental results}
\Cref{tab:foo} shows additional
supporting evidence. 

\begin{table}[htbp]
\footnotesize
  \caption{Example table.}  \label{tab:smfoo}
\begin{center}
  \begin{tabular}{|c|c|c|} \hline
   Species & \bf Mean & \bf Std.~Dev. \\ \hline
    1 & 3.4 & 1.2 \\
    2 & 5.4 & 0.6 \\ \hline
  \end{tabular}
\end{center}
\end{table}

\bibliographystyle{siamplain}
\bibliography{thesisrefs}